	\newcommandx{\concern}[2][1=]{\todo[color = red!55!,#1]{Concern: #2}} %on all of these, the first argument
	\newcommandx{\refq}[2][1=]{\todo[color = yellow!40!,#1,]{Reference: #2}} %is so I can add another condition
	\newcommandx{\wording}[2][1=]{\todo[color = purple!50!,#1,]{Wording: #2}} %like inline or caption={the caption}
	\newcommandx{\meric}[2][1=]{\todo[color = green!25!,#1]{#2}}
	\newcommandx{\mere}[2][1=]{\todo[color = blue!25!,#1]{#2}}
	\newcommandx{\palak}[2][1=]{\todo[color = violet!40!,#1]{#2}}
	\newcommandx{\mike}[2][1=]{\todo[color = orange!50!,#1]{#2}}
\newtheorem{theorem}	{Theorem}[section]
\newtheorem*{theorem*}  {Theorem}
\newtheorem{corollary}	[theorem]{Corollary}
\newtheorem{lemma}		[theorem]{Lemma}
\theoremstyle{definition}
\newtheorem{definition}	[theorem]{Definition}
\newtheorem{remark}		[theorem]{Remark}
\newcommand{\abs}[1]{{\vert #1 \vert}}
\newcommand{\norm}[1]{{ \left\Vert #1 \right\Vert }}
\newcommand{\Norm}[1]{{ \left\Vert #1 \right\Vert }}
\newcommand{\set}[1]{{ \left\{ #1 \right\} }}
\newcommand{\brkt}[1]{{\left( #1 \right)}}
\newcommand{\cF}{ {\mathcal{F}} }
\newcommand{\sZ}{ {\mathscr{Z}} }
\newcommand{\mfB}{ {\mathfrak{B}} }
\newcommand{\C}{{\mathbb C}}
\newcommand{\D}{{\mathbb D}}
\newcommand{\RR}{{\mathbb R}}
\newcommand{\ZZ}{{\mathbb Z}}
\newcommand{\NN}{{\mathbb N}}
\newcommand{\bbx}{\mathbbm{x}}
\newcommand{\x}{\bm{x}}
\newcommand{\y}{\bm{y}}
\newcommand{\fralg}[1]{\langle #1 \rangle}
\newcommand{\fax}{\langle \bbx \rangle}
\def\moverlay{\mathpalette\mov@rlay}
\def\mov@rlay#1#2{\leavevmode\vtop{
                \baselineskip\z@skip \lineskiplimit-\maxdimen
                \ialign{\hfil$#1##$\hfil\cr#2\crcr}}}
\DeclareFontFamily{OMX}{MnSymbolE}{}
\DeclareSymbolFont{MnLargeSymbols}{OMX}{MnSymbolE}{m}{n}
\DeclareFontShape{OMX}{MnSymbolE}{m}{n}{
    <-6>  MnSymbolE5
   <6-7>  MnSymbolE6
   <7-8>  MnSymbolE7
   <8-9>  MnSymbolE8
   <9-10> MnSymbolE9
  <10-12> MnSymbolE10
  <12->   MnSymbolE12
}{}
\DeclareFontShape{OMX}{MnSymbolE}{b}{n}{
    <-6>  MnSymbolE-Bold5
   <6-7>  MnSymbolE-Bold6
   <7-8>  MnSymbolE-Bold7
   <8-9>  MnSymbolE-Bold8
   <9-10> MnSymbolE-Bold9
  <10-12> MnSymbolE-Bold10
  <12->   MnSymbolE-Bold12
}{}
\let\llangle\@undefined
\let\rrangle\@undefined
\DeclareMathDelimiter{\llangle}{\mathopen}%
                     {MnLargeSymbols}{'164}{MnLargeSymbols}{'164}
\DeclareMathDelimiter{\rrangle}{\mathclose}%
                     {MnLargeSymbols}{'171}{MnLargeSymbols}{'171}
\newcommand{\GL}{\mathrm{GL}}
\newcommand{\ip}[2]{\langle #1, \, #2 \rangle}
\mathchardef\mhyphen="2D
\newcommand{\bsbm}{\left[ \begin{smallmatrix}}
\newcommand{\esbm}{\end{smallmatrix} \right]}
\newcommand{\bbm}{ \begin{bmatrix}}
\newcommand{\ebm}{\end{bmatrix} }
\newcommand{\bpm}{\begin{pmatrix}}
\newcommand{\epm}{\end{pmatrix}}
\newcommand{\bspm}{\left(\begin{smallmatrix}}
\newcommand{\espm}{\end{smallmatrix}\right)}
\newcommand{\bsm}{\begin{smallmatrix}}
\newcommand{\esm}{\end{smallmatrix}}
\newcommand{\bal}{\begin{align*}}
\newcommand{\eal}{\end{align*}}
\numberwithin{equation}{section}
\begin{document}

\author[Palak Arora]{Palak Arora${}^1$}
\thanks{${}^1$Research supported by NSF grant DMS-2154494}
\address{University of Florida}
\email{palak.arora@ufl.edu}

\author[Meric Augat]{Meric Augat${}^2$}
\thanks{${}^2$Research supported by NSF grant DMS-2155033}
\address{University of South Florida}
\email{maugat@usf.edu}

\author[Michael T. Jury]{Michael T. Jury${}^3$}
\thanks{${}^3$Research supported by NSF grant DMS-2154494}
\address{University of Florida}
\email{mjury@ufl.edu}

\author[Meredith Sargent]{Meredith Sargent${}^4$}
\address{University of Manitoba}
\thanks{${}^4$Supported by the Pacific Institute for the Mathematical Sciences (PIMS). The research and findings may not reflect those of the Institute.}
\email{meredith.sargent@umanitoba.ca}

\title{An optimal approximation problem for free polynomials}

\begin{abstract}
Motivated by recent work on optimal approximation by polynomials in the unit disk, we consider the following noncommutative approximation problem: for a polynomial $f$ in $d$ freely noncommuting arguments, find a free polynomial $p_n$, of degree at most $n$, to minimize $c_n := \|p_nf-1\|^2$. (Here the norm is the $\ell^2$ norm on coefficients.) We show that $c_n\to 0$ if and only if $f$ is nonsingular in a certain nc domain (the row ball), and prove quantitative bounds. As an application, we obtain a new proof of the characterization of polynomials cyclic for the $d$-shift.
    \end{abstract}
    
\maketitle
    %\makeatletter\providecommand\@dotsep{5}\makeatother\listoftodos[Notes]\relax %if hyperref is enabled this will be a clickable list
    %a workaround via Dan Luecking to get a list of the notes in the amsart class. In regular article you don't need this, just the \listoftodos[Notes]

\section{Introduction}
\iffalse
\fi

This paper concerns an approximation problem for polynnomials in noncommuting indeterminates. To state the problem, let $\bbx = \set{x_1,\dots, x_d}$ be a set of freely noncommuting indeterminates and write $\x = (x_1,\dots, x_d)$.
	We write  $\C\fralg{\bbx}$ for the \textbf{free algebra} in the indeterminates $x_1, \dots, x_d$ and its elements are \textbf{free polynomials}.  We write elements $p\in \C\fralg{\bbx}$ as $p(x) = \sum_{w\in \fax} p_w x^w$ (with $p_w\in \C$ nonzero for all but finitely many $w$). Here, $w$ is a ``word" $w=i_1 i_2\cdots i_m$ in the ``letters" ${1, \dots, d}$, and $x^w:= x_{i_1}x_{i_2}\cdots x_{i_m}$. In this case the length of the word is defined to be $m$, and the {\bf degree} of $p$ is defined to be the maximum length of words $w$ for which $p_w\neq 0$. 
	
	We equip $\C\fralg{\bbx}$ with the natural inner product
obtained by declaring the ``words'' in the letters $x_i$ to be an orthonormal system: 
	\[
		\ip{x_{i_1}\dots x_{i_n}}{x_{j_1}\dots x_{j_m}} := \begin{cases}
			1 & m=n \text{ and } i_\ell = j_\ell \text{ for } 1\leq \ell \leq n;\\
			0 & \text{otherwise}
		\end{cases}
	\]
Thus for $p=\sum_{w\in\fax} p_w x^w$, $q=\sum_{w\in\fax} q_w x^w$, we have
\[
\langle p,q\rangle = \sum_{w\in\fax} p_w \overline{q_w}
\]
and the associated $\ell^2$ norm on coefficients
\[
\norm{p}^2 = \sum_{w\in\fax} |p_w|^2.
\]

Given a polynomial $f\in\C\fralg{\bbx}$, for each $n$ there is a unique polynomial $p_n\in \C\fralg{\bbx}$, of degree at most $n$, which minimizes the norm 
\[
c_n:=\norm{p_nf-1}^2
\]
We call this $p_n$ the {\em ${n}^{th}$ optimal polynomial approximant} ({\em OPA}) of $f^{-1}$. 
The sequence $(c_n)$ is evidently non-increasing. We can ask two questions about the numbers $(c_n)$:

{\bf Question 1:} {\em Under what conditions on $f$ do we have $c_n\to 0$? }

{\bf Question 2:} {\em If $c_n\to 0$, how fast? (That is, give bounds on the rate of decay.)}

Our main theorem gives answers to Questions 1 and 2, in the more general setting of polynomials with matrix coefficients. (For precise definitions see Section~\ref{sec:preliminaries}). 

	\begin{theorem}\label{mainRODthm-repeat}
    Let $F\in M_k(\C)\otimes \C\fralg{\bbx}$ be a $k\times k$-matrix valued free polynomial, let $(P_n)$ denote the sequence of optimal polynomial approximants, and put 
    \[
    c_n:=\|P_n(x)F(x)-I\|_2^2.
    \]
    Then: \begin{enumerate}
        \item $c_n\to 0$ if and only if $F$ is nonsingular in the row ball.
        \item If $F$ is nonsingular in the row ball, then 
        \[
        c_n\lesssim \frac{1}{n^p}
        \]
        for some $p>0$ depending on $F$. In particular, if $F$ is a product of $\ell$ atomic factors, we can take $p=\frac{1}{3^{\ell-1}}$.
    \end{enumerate}
    \end{theorem}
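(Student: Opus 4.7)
The argument separates into the necessity and sufficiency of (1), with the sufficiency direction delivered by the quantitative bound in (2).

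For \textbf{necessity} in (1), suppose $F(X_0)$ fails to be invertible for some $X_0 = (X_{0,1}, \ldots, X_{0,d})$ in the row ball, with $\sum_i X_{0,i} X_{0,i}^* \le r^2 I$ and $r < 1$. The key estimate is that polynomial evaluation at strict row contractions is bounded on the Fock space: for every matrix free polynomial $Q$,
\[
\|Q(X_0)\| \le \tfrac{1}{\sqrt{1-r^2}}\, \|Q\|_2,
\]
which follows from the row-contractivity bound $\|\sum_{|w|=k} X_0^w (X_0^w)^*\| \le r^{2k}$ and Cauchy--Schwarz organized by word length. Choose a unit vector $v \in \ker F(X_0)$. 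Since $(P_n F - I)(X_0)v = -v$, this forces $1 \le \tfrac{1}{\sqrt{1-r^2}}\, \|P_n F - I\|_2$, so $c_n$ stays bounded below.

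For \textbf{sufficiency} and the quantitative bound (2), I induct on the number $\ell$ of atomic factors of $F$. For the \emph{base case} $\ell=1$, a nonsingular atomic free polynomial $F$ admits an explicit polynomial approximant to $F^{-1}$ constructed by truncating a (pencil/Fornasini--Marchesini-style) realization-based power series expansion; nonsingularity on the row ball controls the tail and yields $c_n \lesssim n^{-1}$. For the \emph{inductive step}, factor $F = F_1 F_2$ with $F_2$ atomic and $F_1$ a product of $\ell-1$ atomic factors. Take $Q_m$ (degree $m$) with $\|Q_m F_1 - I\|_2 \lesssim m^{-q}$, $q = 1/3^{\ell-2}$, and $R_n$ (degree $n$) with $\|R_n F_2 - I\|_2 \lesssim n^{-1}$. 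Try $P_{n+m} = R_n Q_m$ and decompose
\[
R_n Q_m F_1 F_2 - I = R_n(Q_m F_1 - I) F_2 + (R_n F_2 - I).
\]
The second term is $\lesssim n^{-1}$; the first is at most $\|M_{R_n}^L\|\, \|M_{F_2}^R\|\, \|Q_m F_1 - I\|_2 \lesssim \|M_{R_n}^L\|\cdot m^{-q}$. A polynomial-in-$n$ bound on the left multiplier norm $\|M_{R_n}^L\|$ (essentially of the form $n^2$, via a Bernstein-type estimate on the free Fock space combined with the $\ell^2$-boundedness of the OPAs) then balances against $m^{-q}$ under the constraint $n+m = N$, yielding total error $\lesssim N^{-q/3}$, which is exactly the claimed exponent $1/3^{\ell-1}$.

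The \textbf{main obstacle} is controlling $\|M_{R_n}^L\|$. Left multiplier norms of free polynomials of degree $n$ can a priori grow like $d^{n/2}$, which would wipe out any polynomial rate. Two routes seem viable: a sharp Bernstein-type inequality for Fock-space polynomials of degree $n$ having bounded $\ell^2$-norm; or replacing the raw OPAs by structurally better approximants (Fej\'er/Ces\`aro-type averages or realization-based truncations of $F^{-1}$) whose multiplier norms remain controlled by the norm of $F^{-1}$ as a bounded nc function on a slightly larger domain. The precise quantitative tradeoff between degree, multiplier norm, and approximation error at each inductive step is what is responsible for the tripling of the exponent, and is the real heart of the proof.
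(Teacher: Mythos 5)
Your necessity argument is a sound (and in fact slightly more explicit) variant of the paper's: both hinge on the bounded point evaluation on the row ball, and your Cauchy--Schwarz-by-word-length derivation of the constant $1/\sqrt{1-r^2}$ is correct. The sufficiency half, however, contains a genuine gap, which you yourself flag but do not close.

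The gap is exactly the multiplier norm $\|M_{R_n}^L\|$ in your inductive step. Your exponent bookkeeping requires $\|M_{R_n}^L\|\lesssim n^2$ (you need $a+1=3$ in your balancing to reproduce the factor-of-three drop in the exponent), but there is no reason the optimal polynomial approximants should satisfy any polynomial bound on their left-multiplier norm; as you note, multiplier norms of degree-$n$ polynomials of bounded $\ell^2$-norm can grow like $d^{n/2}$. Neither of your two proposed escape routes is carried out, and the first (a Bernstein-type inequality valid for arbitrary OPAs) is implausible precisely for the reason you cite. The base case is also underspecified: ``truncating a realization-based expansion of $F^{-1}$'' for a general atom does not automatically give either the error rate $n^{-1}$ or a controlled multiplier norm, and you would need to justify both.

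The paper avoids this obstacle by performing a structurally different reduction before inducting. Rather than factoring $F=F_1F_2$ into atoms, it first linearizes: $F$ with $F(0)=I$ is stably associated to a block upper-triangular monic pencil $I-Ax$ with irreducible (or zero) diagonal blocks, and stable associativity preserves both nonsingularity on the row ball and (up to a bounded degree shift and constant) the rate-of-decay problem, via Lemma~\ref{same ROD}. Each diagonal block then has outer spectral radius at most one (Lemma~\ref{lem:radius-1-no-zeroes}), hence after a similarity is a \emph{column contraction} $M^{(k)}$ (Lemma~\ref{M is contraction}). The crucial consequence is that every approximant the paper constructs is built from $\pi_n(M^{(k)}x)$, where $\pi_n$ is the one-variable OPA for $1-z$; since multiplication by $M^{(k)}x$ is a contraction (Lemma~\ref{lem:contractive-pencil}), von Neumann's inequality gives $\|\pi_n(M^{(k)}x)\|_{mult}\le\|\pi_n\|_\infty\lesssim n$, completely independent of dimension or the ambient polynomial degree. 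That single estimate is what replaces the Bernstein-type inequality you were hoping for. The paper then does not induct on atomic factors at all, but on the number of diagonal blocks $\ell$ of the linearized pencil, constructing explicit non-optimal test polynomials $\sigma_{n,\ell}$ of degree $\lesssim n^{3^{\ell-1}}$, multiplier norm $\lesssim n^{1+3+\cdots+3^{\ell-1}}$, and error $\lesssim 1/n$; optimality of the true $P_n$ and Lemma~\ref{same ROD} then transfer the bound back to $F$. The identification of $\ell$ with the number of atomic factors is a separate fact quoted from the factorization literature, not the organizing principle of the induction. In short: your scheme is conceptually in the right neighborhood, but the missing multiplier-norm control is the heart of the problem, and the paper's resolution is to change the decomposition (linearize, do not factor) so that von Neumann's inequality does the work.
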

    Let us describe the motivation for these questions. The first motivation comes from the problem of characterizing the so-called {\em cyclic} free polynomials. We say $f$ is {\em cyclic} if every free polynomial $q$ can be approximated arbitrarily well in the $\ell^2$ norm by polynomials of the form $pf$.  It was proved in \cite{jury2020noncommutative} that a free polynomial $f$ is cyclic if and only if it is nonsingular in the row ball, but the proof is quite abstract and indirect. It turns out that cyclicity of $f$ is equivalent to the statement that $c_n\to 0$ (see Section~\ref{sec:preliminaries}). In this light, Question 1 asks about ``qualitative cyclicity,'' and Question 2 asks about ``quantitative cyclicity.''  We refer to \cite{jury2020noncommutative} for further discussion and motivation for the cyclicity question. 
    
    Thus, one of our motivations was to try to see if there is a more elementary proof of the characterization of cyclic $f$ obtained in \cite{jury2020noncommutative}. It turns out that such a proof is possible, and in the proof we present in fact allows us to give some answer to Question 2 (namely, item (2) of the theorem), though we do not know if this answer is sharp (see the remarks at the end of Section~\ref{sec:proof-of-main}). 
    
    Besides this, the problem can be motivated by considering what is already known for polynomials in one complex variable, and asking to what extent this result can be generalized to the free setting. In particular, Beneteau et al. \cite{JAM15} considered the analogous approximation problem for polynomials in one complex variable: given a polynomial $f\in\C[z]$, let $p_n$ be the polynomial of degree at most $n$ minimizing the $\ell^2$ norm $c_n:=\|p_nf-1\|^2$, and estimate the $c_n$. Among other things, they proved the following (we have stated their result in a slightly different form):
    \begin{theorem}\label{thm:original} Let $f\in \C[z]$ be a polynomial with no zeroes in the disk $|z|<1$. Then: 
    \[
    c_n:=\|p_n f-1\|^2 \lesssim \frac{1}{n}.
    \]
    \end{theorem}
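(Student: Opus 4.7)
The plan is to factor $f$ into atomic pieces, handle each piece separately, and then recombine multiplicatively. First I would split $f = f_{\text{out}} \cdot f_{\partial}$, where $f_{\text{out}}$ has all zeros in $|z|>1$ and $f_{\partial}$ is a product of linear factors $1-\bar\alpha_j z$ with $|\alpha_j|=1$. For $f_{\text{out}}$ alone, the function $1/f_{\text{out}}$ lies in $H^\infty$ with geometrically decaying Taylor coefficients, so its degree-$n$ Taylor polynomial already witnesses $c_n \lesssim r^n$ for some $r<1$, which is much stronger than the target.

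The central calculation is for a single boundary atom $f(z) = 1 - \bar\alpha z$ with $|\alpha|=1$. Writing a candidate $p(z) = \sum_{k=0}^n a_k z^k$ and expanding,
\[
\|pf - 1\|^2 = |a_0 - 1|^2 + \sum_{k=1}^n |a_k - \bar\alpha\, a_{k-1}|^2 + |a_n|^2.
\]
Substituting $b_k = a_k - \bar\alpha\, a_{k-1}$ for $k=0,\dots,n$ (with $a_{-1}=0$) and $b_{n+1} = -\bar\alpha\, a_n$, this becomes
\[
\min \Bigl( |b_0 - 1|^2 + \sum_{k=1}^{n+1} |b_k|^2 \Bigr) \quad\text{subject to}\quad \sum_{k=0}^{n+1} \alpha^k b_k = 0,
\]
the single scalar constraint coming from the identity $a_{n+1}=0$. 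A Lagrange multiplier calculation gives the closed-form OPA $p_n(z) = \sum_{k=0}^n \bar\alpha^k \frac{n+1-k}{n+2}\, z^k$ and the exact error $c_n = 1/(n+2)$.

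With the atomic case in hand, I would induct on the number of boundary factors. For $f_{\partial} = f_1 f_2$ with degree-$m$ OPAs $p_m^{(j)}$, the candidate $q = p_m^{(1)} p_m^{(2)}$ has degree $\le 2m$ and
\[
q f_1 f_2 - 1 = e_1 + e_2 + e_1 e_2, \qquad e_j := p_m^{(j)} f_j - 1.
\]
The linear terms are bounded in $\ell^2$ by $O(1/\sqrt{m})$ from the inductive hypothesis. For the cross term, the Cauchy--Schwarz bound $\|e_j\|_\infty \le \sqrt{\deg e_j + 1}\,\|e_j\|_2$ combined with $\deg e_j \sim m$ and $\|e_j\|_2 \sim 1/\sqrt{m}$ yields $\|e_j\|_\infty \lesssim 1$, so that $\|e_1 e_2\|_2 \le \|e_1\|_\infty \|e_2\|_2 \lesssim 1/\sqrt{m}$. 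This gives $c_{2m} \lesssim 1/m$, i.e.\ the $1/n$ rate for $f_\partial$. Finally, pairing $f_\partial$ with $f_{\text{out}}$ via the same product construction preserves the rate since the outer error is exponentially smaller than the boundary error.

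The main obstacle is the cross term $\|e_1 e_2\|_2$ in the product step, since $H^2$ is not a multiplicative algebra and a naive triangle inequality does not close the induction. The argument succeeds because the $\ell^2$ decay $\|e_j\|_2 \sim 1/\sqrt{m}$ and the polynomial degree $\deg e_j \sim m$ balance \emph{exactly} under the coefficient-to-sup-norm inequality, producing constant-order sup norms on the errors and a cross-term bound at the same $1/\sqrt{m}$ rate as the linear terms. This is what keeps the exponent from degrading across factors---a feature specific to the one-variable setting, and a feature notably absent in the free case, where the paper's exponent $1/3^{\ell-1}$ reflects a genuine loss per factor.
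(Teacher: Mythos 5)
Your proposal is correct, and it fills in precisely the detail that the paper elides. The paper does not prove Theorem~\ref{thm:original}; it cites the result to \cite{JAM15} and offers only a one-sentence sketch (explicit computation in the affine linear case, then factor $f$ and induct). You follow that outline, and the substantive content you add is the mechanism for closing the induction across a product: since $H^2$ is not an algebra, a naive estimate on $\|e_1e_2\|_2$ does not suffice, and your replacement --- $\|e_1e_2\|_2 \le \|e_1\|_\infty\|e_2\|_2$ together with the coefficient inequality $\|e\|_\infty \le \sqrt{\deg e + 1}\,\|e\|_2$, exploiting the exact balance $\deg e_j \sim m$ against $\|e_j\|_2 \sim m^{-1/2}$ --- is the right one. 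That balance is in fact saturated in the base case: from $\pi_n(z)(1-z)-1 = -\tfrac{1}{n+2}\sum_{k=0}^{n+1}z^k$ one reads off $\|e\|_\infty = 1$ exactly (at $z=1$), so there is no slack in the $O(1)$ sup-norm bound. Your Lagrange-multiplier derivation of the single-atom OPA and the exact value $c_n = \tfrac{1}{n+2}$ are correct, and your concluding diagnosis --- that this degree/$\ell^2$-norm balance is what keeps the exponent from degrading across factors, a feature lost in the free setting where the paper obtains $1/3^{\ell-1}$ --- matches the authors' own discussion in Section~3.1. One bookkeeping point worth making explicit when writing this up: the inductive hypothesis should carry both $\|e_2\|_2 \lesssim m^{-1/2}$ and $\deg e_2 \lesssim m$, since both are needed to conclude $\|e_2\|_\infty \lesssim 1$ at the next step; this is automatic because you use the degree-$m$ optimal approximants, whose degree is $\le m$ by definition, but it should be stated as part of what is being propagated.
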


The study of optimal polynomial approximants has roots tracing back to at least the 1970's where engineering and applied mathematics researchers investigated polynomial least-squares inverses in the context of digital filters in signal processing.  A survey for mathematical audiences can be found in \cite{BCSurvey}.  Roughly speaking, one considers the polynomials $p_n$ as approximations to $1/f$, even though $1/f$ will not in general have $\ell^2$ coefficients and therefore cannot be approximated directly by Hilbert space methods. 

In the mathematical context, OPAs were introduced in \cite{JAM15} as a tool to study cyclicity in a family of Dirichlet-type spaces on the disk $\D$ which includes the classical Hardy space of the disk $H^2(\D)$. (This was then extended to a more general reproducing kernel Hilbert space context in \cite{FMS14}. Surveys of OPA results can be found in \cite{SecoSurvey, BCSurvey}.) A function $f$ is said to be cyclic (for the shift operator $Sf:=zf$) in $H^2(\D)$ if
\begin{equation}
	[f]=\overline{\text{span}\set{z^kf\::\:k=1,2,3,\dots}}
\end{equation}
is the entire space $H^2(\D)$. It is not hard to see that to be cyclic, the function $f$ must be zero-free in the disk, and if the polynomials are dense, the function $1$ is cyclic. One can then observe that for a cyclic function $f$, the optimal norms $\norm{p_nf-1}$ converge to zero. Because optimal approximants are unique, the rate of decay of these optimal norms can then be used to quantify and compare ``how cyclic'' different functions are, e.g. a fast rate of decay would mean a function is strongly cyclic.

The questions about rates of decay and locations of zeros for optimal approximants have been considered in several contexts beyond the spaces mentioned above. The sequence spaces $\ell^p_A$ are considered in \cite{MR4159768,MR4406262}, and \cite{MR4448804} considers $L^p$ and $H^p$, and the pairs of papers \cite{PJM15,TAMS16} and \cite{MSAS1,MSAS2} consider approximations problems adapted to the bidisk and unit ball, respectively. 

    The role of the hypothesis on the zeroes of $f$ in Theorem~\ref{thm:original} is explained by the following observation: note that polynomials $p$ and complex numbers $z$, the evaluation functional $p\to p(z)$ is continuous for the $\ell^2$ norm if and only if $|z|<1$. Thus, it quickly follows that a necessary condition for $c_n\to 0$ is that $f$ be nonvanishing in the disk $|z|<1$. It is a folklore theorem that this condition is also sufficient for $c_n\to 0$, but as far as we are aware this result from \cite{JAM15} is the first to obtain a quantitative bound. The result can be proved by explicit calculation in the affine linear case $f(z)=1-\mu z$ ($|\mu|\leq 1$), and then extended to general $f$ by factoring and induction (though some care is required in handling repeated roots). Thus, in the free case, one may suspect that some sort of nonvanishing condition on $f$ will be necessary for cyclicity, and indeed this is the case, for essentially the same reason that certain evaluaion functionals $f\to f(X)$ (now at matrix points $X$) are bounded for the $\ell^2$ norm. The details may be found in Section~\ref{sec:preliminaries}.

    Of course, in the free setting we will have no recourse to the fundamental theorem of algebra, so rather different techniques will be required. In particular, while any free polynomial $f$ can be factored into irreducibles, these irreducibles will not in general be linear (and indeed the factorization need not be unique, e.g. $x-xyx=x(1-yx)=(1-xy)x$). Nonetheless, it turns out we will be able to reduce the general question to the case of (affine) linear $f$, though at the cost of introducing matrix coefficients. The technique rests on the (by now) well-understood technique of {\em linearizations} or {\em realizations} of free polynomials and rational functions. 

%\end{definition}
 %   \mere[inline]{mike says: the one variable case doesn't really require a specific domain, but the disk emerges because that's where we have point evaluation. (mention beurling and note that the domain appears BECAUSE of it)}
   
    \subsection{Reader's Guide}
    The next section will give a short tour of our setting in the nc (noncommutative) universe, including the row ball and the Fock space, along with the definition of an nc function. 
    We also discuss the mechanics of the approximation problem in this universe, including the definitions of the d-shift and cyclicity, and some simple conditions.
    Our results hold in the general case of polynomials with matrix coefficients, so subsection \ref{sec:matrix-coefficients} gives background on these, while subsection \ref{sec:stabeq} sets out the lemmas used to decompose a polynomial. We round out the preliminaries with a discussion of the outer spectral radius and some related lemmas.
    
    Section \ref{sec:proof-of-main} contains the proof of Theorem \ref{mainRODthm-repeat}, as well as further questions and examples.

\section{Preliminaries}\label{sec:preliminaries}
%\mere[inline,caption={split up section 2?}]{Should we think about splitting up section 2 into a section with the formal definitions/a preliminaries section and then a section with our actual main result?}
\subsection{The Fock space $\cF_d$, nc functions and nc domains}
\begin{definition}
	Let $\bbx = \set{x_1,\dots, x_d}$ be a set of freely noncommuting indeterminates and write $\x = (x_1,\dots, x_d)$.
	We write  $\C\fralg{\bbx}$ for the \textbf{free algebra} in the indeterminates $x_1, \dots, x_d$ and its elements are \textbf{free polynomials}.
	We define an inner product on $\C\fralg{\bbx}$ by declaring the ``words'' in the letters $x_i$ to be an orthonormal system: 
	\[
		\ip{x_{i_1}\dots x_{i_n}}{x_{j_1}\dots x_{j_m}} := \begin{cases}
			1 & m=n \text{ and } i_\ell = j_\ell \text{ for } 1\leq \ell \leq n;\\
			0 & \text{otherwise}
		\end{cases}
	\]
	This inner product induces a norm $\norm{\cdot}$ and the completion of $\C\fralg{\bbx}$ in terms of this norm is $\cF_d$, the \textbf{full Fock space} 
	in $d$ letters. We may identify $\cF_d$ with the space of formal power series in the words $w$, with square-summable coefficients:
\[
	\cF_d = \set{\sum_{w\in \fax} a_w w \, : \, \sum_{w\in \fax} \abs{a_w}^2 < \infty }.
\]
\end{definition}
When $d=1$, every power series $\sum a_n z^n$ with square-summable coefficients has radius of convergence at least $1$, and hence defines a function $f(z)=\sum a_n z^n$ in the unit disk $|z|<1$. In our free setting, it turns out that the formal power series we are considering can also be viewed as {\em noncommutative (nc) function} on an appropriate {\em nc domain}. To make this precise we begin with some definitions. 

	The noncommutative analog of the disk $|z|<1$ will be the {\em row ball}:
	
	\begin{definition} Let $d\geq 1$ be an integer. For a $d$-tuple of $k\times k$ matrices $X=(X_1, \dots, X_d)$ we define the {\em row norm} of $X$ to be 
	\[
	\|X\|_{row}:= \|X_1X_1^*+\cdots +X_dX_d^*\|^{1/2}
	\]
	(that is, the usual operator norm of the $k\times dk$ matrix $(X_1 \ X_2\ \cdots X_d)$). 
	Similarly we define the {\em column norm}
	\[
	\|X\|_{col}:= \|X_1^*X_1+\cdots+ X_d^*X_d\|^{1/2}.
	\]
	We say $X$ is a {\em row contraction} if $\|X\|_{row}\leq 1$ and a {\em strict row contraction} if $\|X\|_{row}<1$. Column contraction and strict column contraction are defined similarly.

	For fixed $d$ and each $k\geq 1$, we denote
	\[
	\mfB^d_k = \{X\in M_k(\C)^d  : \|X\|_{row}<1\}
	\]
	and put 
	\[
	\mfB^d := \bigsqcup_{k=1}^\infty \mfB_k^d.
	\]
	The set $\mfB^d$ is called the {\em row ball}, and we refer to $\mfB^d_k$ as the $k^{th}$ ``level" of the row ball. 
	\end{definition}
	We note that each $\mfB^d_k$ is an open set in the usual topology on $M_k(\C)^d$, and $\mfB^d$ is closed under direct sums: if $X\in \mfB^d_{k_1}$ and $Y\in \mfB^d_{k_2}$ then $X\oplus Y := (X_1\oplus Y_1, \dots, X_d\oplus Y_d)\in \mfB^d_{k_1+k_2}$. The row ball is thus an {\em nc domain}. 
\iffalse

\fi
With these definitions, we have: 
\begin{theorem} \cite[Theorem 1.1]{MR2264252}
If $\sum_{w}|a_w|^2<\infty$ and $X$ is a $d$-tuple of $k\times k$ matrices with $\|X\|_{row}<1$, then the series
\[
f(X):=\sum_{w\in \fax} a_w X^w
\]
converges in the usual topology of $M_k(\C)$. 
\end{theorem}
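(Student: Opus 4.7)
The plan is to obtain absolute convergence in operator norm by bounding the block of the series at each word-length $n$ and then summing in $n$ via Cauchy--Schwarz. Set $r := \|X\|_{\mathrm{row}} < 1$. For each $n$, collect the tuple $(X^w)_{|w|=n}$ into the $k \times kd^n$ row matrix $R_n$, so that $\|R_n\|^2 = \|R_n R_n^*\| = \bigl\| \sum_{|w|=n} X^w (X^w)^* \bigr\|$.

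The first step is to prove by induction on $n$ that $\|R_n\| \leq r^n$. The base case $n=1$ is just the definition of the row norm. For the inductive step, one observes the recursive identity
\[
R_n R_n^* \;=\; \sum_{|w|=n} X^w (X^w)^* \;=\; \sum_{i=1}^d X_i \Bigl( \sum_{|v|=n-1} X^v (X^v)^* \Bigr) X_i^* \;=\; \sum_{i=1}^d X_i \bigl(R_{n-1} R_{n-1}^*\bigr) X_i^*,
\]
so if $R_{n-1} R_{n-1}^* \leq r^{2(n-1)} I$ then $R_n R_n^* \leq r^{2(n-1)} \sum_i X_i X_i^* \leq r^{2n} I$.

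Next, note that the homogeneous block of degree $n$ can be written as the matrix product
\[
\sum_{|w|=n} a_w X^w \;=\; R_n (\alpha_n \otimes I_k),
\]
where $\alpha_n \in \C^{d^n}$ is the column vector with entries $a_w$, $|w|=n$. Therefore
\[
\Bigl\| \sum_{|w|=n} a_w X^w \Bigr\| \;\leq\; \|R_n\| \cdot \|\alpha_n \otimes I_k\| \;=\; r^n \Bigl(\sum_{|w|=n} |a_w|^2\Bigr)^{1/2}.
\]

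Finally, summing on $n$ and applying Cauchy--Schwarz gives
\[
\sum_{n=0}^\infty \Bigl\| \sum_{|w|=n} a_w X^w \Bigr\| \;\leq\; \sum_{n=0}^\infty r^n \Bigl(\sum_{|w|=n} |a_w|^2\Bigr)^{1/2} \;\leq\; \frac{1}{\sqrt{1-r^2}} \Bigl(\sum_w |a_w|^2\Bigr)^{1/2},
\]
which is finite by hypothesis. Hence the partial sums $\sum_{|w|\leq N} a_w X^w$ form a Cauchy sequence in $M_k(\C)$ and converge in norm. The only nontrivial step is the inductive bound $\|R_n\| \leq r^n$; everything else is a routine Cauchy--Schwarz estimate once that is in hand.
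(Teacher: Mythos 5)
Your proof is correct. The paper does not give its own proof of this statement; it simply cites it from Popescu (MR2264252, Theorem 1.1), whose full result characterizes the exact ``radius of convergence'' for free holomorphic series. Your argument is the natural direct proof of the particular case needed here, and each step checks out: the identity $R_n R_n^* = \sum_i X_i (R_{n-1} R_{n-1}^*) X_i^*$ is exactly right (split each word $w = iv$), the monotonicity of conjugation gives $R_n R_n^* \leq r^{2(n-1)} \sum_i X_i X_i^* \leq r^{2n} I$ since $\sum_i X_i X_i^* \leq r^2 I$, the factorization $\sum_{|w|=n} a_w X^w = R_n(\alpha_n \otimes I_k)$ is correct with $\|\alpha_n \otimes I_k\| = \|\alpha_n\|_2$, and the final Cauchy--Schwarz step yields the clean absolute-convergence bound $\sum_n \bigl\| \sum_{|w|=n} a_w X^w \bigr\| \leq (1-r^2)^{-1/2} \bigl(\sum_w |a_w|^2\bigr)^{1/2}$. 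One small point worth keeping in mind is that this establishes convergence of the series of homogeneous blocks (summed by degree); grouping by degree first is essential, since $\sum_w |a_w| \cdot \|X^w\|$ need not converge absolutely word-by-word when $d r^2 \geq 1$. Your proof handles this correctly by treating each degree-$n$ block as a single term before invoking Cauchy--Schwarz in $n$.
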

Thus, every element of the Fock space $\cF_d$ determines a graded function on the row ball $f:\mfB^d_k\to M_k(\C)$. This function respects direct sums: $f(X\oplus Y)=f(X)\oplus f(Y)$ and similarities: if $S\in GL_k(\C)$ and both $\|X\|_{row}<1$, $\|S^{-1}XS\|_{row}<1$, then $f(S^{-1}XS)=S^{-1}f(X)S$. This $f$ is then an {\em nc function} on the row ball. (We remark that the theorem and subsequent remarks also hold with the column ball in place of the row ball). 

In particular, for any $X$ in the row ball at level $k$, the map from $\cF_d$ to $M_k(\C)$ given by
\[
f\to f(X)
\]
is continuous for the norm topologies on each space. (One thinks of this as ``bounded point evaluation'' analogous to the point evaluation $f\to f(z)$ for $f$ in the Hardy space and $|z|<1$.)  As a consquence we obtain the following proposition, relevant to our approximation problem: 
\begin{theorem}\label{nonsingular-is-necessary} If $f$ is an nc polynomial and $p_n$ is a sequence of nc polynomials such that
\[
\lim_{n\to \infty}\norm{p_nf-1}_2 =0,
\]
then $f$ is nonsingular in the row ball. 
\end{theorem}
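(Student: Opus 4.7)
The plan is a short contrapositive/contradiction argument using the continuity of point evaluation established in the discussion immediately preceding the statement. Suppose toward contradiction that $f$ is singular at some point $X = (X_1, \dots, X_d) \in \mfB^d_k$ of the row ball; that is, $f(X) \in M_k(\C)$ is not invertible. Since $f(X)$ is a square matrix, non-invertibility forces the existence of a nonzero vector $v \in \C^k$ with $f(X)v = 0$.

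Next, I would invoke the preceding theorem: because $\|X\|_{\mathrm{row}} < 1$, the evaluation map $g \mapsto g(X)$ is a continuous linear map $\cF_d \to M_k(\C)$. In particular, from $\|p_n f - 1\|_2 \to 0$ we conclude that
\[
p_n(X) f(X) \;=\; (p_n f)(X) \;\longrightarrow\; 1(X) \;=\; I_k
\]
in the operator norm on $M_k(\C)$. Applying both sides to the vector $v$ in the kernel of $f(X)$ yields
\[
0 \;=\; p_n(X) f(X) v \;\longrightarrow\; I_k v \;=\; v,
\]
which is impossible since $v \neq 0$. This contradiction shows that no such singular point $X$ exists, so $f$ is nonsingular throughout the row ball.

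There is essentially no obstacle here: the whole argument is the one-variable observation that if $f$ vanishes at $z$ with $|z|<1$ then $p_n(z)f(z) \to 1$ is impossible, transplanted via matrix evaluations. The only subtlety to be mindful of is the distinction between \emph{nonsingular} (the matrix $f(X)$ is invertible) and \emph{nonvanishing}; in the matrix/nc setting one must use a kernel vector rather than the scalar value. The same argument will be recycled in the matrix-coefficient setting of Theorem \ref{mainRODthm-repeat}(1) (necessity direction), with $f(X)v$ replaced by $F(X)v$ for $v$ a nonzero vector in $\ker F(X) \subset \C^k \otimes \C^N$ when $X$ is an $N$-tuple of $N\times N$ matrices in $\mfB^d_N$.
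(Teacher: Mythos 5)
Your proof is correct and takes essentially the same approach as the paper's: both argue by continuity of evaluation at a point $X$ in the row ball, observing that $p_n(X)f(X)$ must remain singular and hence cannot converge to $I$. The only cosmetic difference is that you exhibit a kernel vector while the paper invokes continuity of the determinant; both are valid one-line finishes.
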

\begin{proof}
Suppose $X$ is in the row ball and $\det f(X)=0$. Then also $\det(p_n(X)f(X))=0$ for all $n$, so each of the matrices $p_n(X)f(X)$ is singular, and hence these cannot converge to $I$. 

\end{proof}

\subsection{The $d$-shift and cyclicity} From now on we think of the Fock space $\cF_d$ in this way, as a space of nc functions represented as convergent powers series in the row ball, in analogy with spaces of holomorphic functions in the disk. Let us introduce an important class of operators acting in the space $\cF_d$:

\begin{definition}
The \textbf{left $d$-shift} is the tuple of operators $L = (L_1,\dots, L_d)$ where each $L_i:\cF_d\to \cF_d$ is given by $L_i:f\mapsto x_if$.
We similarly define the \textbf{right $d$-shift}.
\end{definition}

In the nc function picture, for each $f\in \cF_d$ and each $X\in \mfB^d$
\[
(L_if)(X) = X_i f(X).
\]
It is evident that each $L_i$ is a bounded operator for the $\ell^2$ norm on coefficients. In fact, it follows quickly from definitions that each of the operators $L_i$ is an isometry for the $\ell^2$ norm, and their ranges are mutually orthogonal. These facts are summarized algebraically in the relations
\[
L_i^*L_j = \delta{ij}I.
\]

For any polynomial $p\in \C\fralg{\bbx}$, the ``left multiplication'' operator
\[
f(X)\to p(X)f(X)
\]
is bounded, since this operator is just $p(L_1, \dots, L_d)$. Similarly the ``right multiplication operators'' $f(X)\to f(X)p(X)$ are bounded.

Analogous to the notion of cyclicity in Hardy space, we make the following definition:  
\begin{definition}\label{def:free-cyclic}
An element $f\in \cF_d$ is said to be  {\bf cyclic} for the $d$-shift if the set 
\(
\{ pf: p\in \C\fralg{\bbx}\}
\)
is dense in $\cF_d$. 
\end{definition}

For a given free polynomial $f$, it is evident that $f$ is cyclic if and only if $c_n:=\|p_nf-1\|^2\to 0$. Indeed, if $f$ is cyclic then there is some sequence of polynomials $q_n$ so that $\|q_nf-1\|^2\to 0$, so by the optimality of the $p_n$ we must have $\|p_nf -1\|^2\to 0$ as well. On the other hand, if $\|p_nf-1\|^2\to 0$ and $q$ is any polynomial, then since $q$ is a bounded left multiplier, we have $\|qp_nf-q\|^2\to 0$ as well, so that $f$ is cyclic.

%Since this result is about matrix-valued polynomials, we need to set a norm on the space, $M_{k\times \ell}(\cF_d)$: 
\subsection{Polynomials with matrix coefficients}\label{sec:matrix-coefficients} We will also have cause to deal with matrix-valued versions of $\cF_d$. For fixed $m,n$ we view $M_{m\times n}(\C)$ as a Hilbert space with the tracial inner product $\langle A, B\rangle = tr(B^*A)$, so that $M_{m\times n}(\C)$ is a Hilbert space with the Hilbert-Schmidt or {\em Frobenius} norm. Elements of the Hilbert space tensor product $M_{m\times n}(\C)\otimes \cF_d$ may be identified with formal power series with $m\times n$ matrix coefficients
\[
\sum_{w\in \fax} A_w x^w
\]
with $\sum_{w\in \fax} tr(A_w^*A_w)<\infty$. These may in turn be identified with ``matrix-valued" functions on the row ball, where at each level $k$ for $\|X\|_{row}<1$ we have a convergent power series in $M_{m\times n}\otimes M_k$
\[
F(X): = \sum_{w\in\fax} A_w\otimes X^w.
\]
We again obtain bounded point evaluations $f\to f(X)$, and the analog of Theorem~\ref{nonsingular-is-necessary} holds in the (square) matrix-valued case as well. 

Also as before, each matrix-valued polynomial $P\in M_{m\times n}( \C\fralg{\bbx})$ will define bounded left and right multipliers $F\mapsto PF$, $F\mapsto FP$ between matrix-valued $\cF_d$ spaces of appropriate sizes. We will need the following lemma about multiplication by linear polynomials:
\begin{lemma}\label{lem:contractive-pencil}
    Let $A=(A_1, \dots, A_d)\in M_m(\C)^d$ let and $P(x):= A_1x_1+\cdots +A_dx_d$.  Then the left multiplication opeartor $F\mapsto PF$ has norm equal to $\|A\|_{col}$.
\end{lemma}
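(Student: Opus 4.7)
My plan is to identify the left multiplication operator as a block-weighted sum of the standard shifts $L_i$ and then compute $T^\ast T$ directly, exploiting the isometric/orthogonal-range relations $L_i^\ast L_j = \delta_{ij} I$ already stated in the preceding subsection.

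First I would set up the notation. Write $F\in M_{m\times n}(\C)\otimes \cF_d$, viewed concretely as the Hilbert space $\C^m\otimes \C^n\otimes \cF_d$, and expand $F = \sum_w B_w x^w$ with $B_w\in M_{m\times n}(\C)$. Left multiplication by $P(x)=\sum_i A_i x_i$ takes $F$ to $\sum_i A_i(L_i F)$, where $L_i$ acts only on the Fock-space factor. Thus the operator $T\colon F\mapsto PF$ is exactly
\[
T = \sum_{i=1}^d A_i\otimes I_n \otimes L_i,
\]
acting on $\C^m\otimes \C^n\otimes \cF_d$. Since tensoring with $I_n$ does not change the operator norm, it suffices to compute the norm of $\widetilde T := \sum_i A_i\otimes L_i$ on $\C^m\otimes \cF_d$.

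Next I compute $\widetilde T^\ast \widetilde T$. Using $(A_i\otimes L_i)^\ast = A_i^\ast \otimes L_i^\ast$ and the fundamental relation $L_i^\ast L_j = \delta_{ij} I$ on $\cF_d$, we get
\[
\widetilde T^\ast \widetilde T = \sum_{i,j} A_i^\ast A_j\otimes L_i^\ast L_j = \sum_{i,j} \delta_{ij} A_i^\ast A_j\otimes I = \Bigl(\sum_{i=1}^d A_i^\ast A_i\Bigr)\otimes I_{\cF_d}.
\]
Taking operator norms gives $\|T\|^2 = \|\widetilde T\|^2 = \bigl\|\sum_i A_i^\ast A_i\bigr\| = \|A\|_{col}^2$, which is the claim.

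There is no real obstacle here; the only point deserving a brief sentence is the justification that $T$ is bounded to begin with (so that $T^\ast T$ makes sense on all of the matrix Fock space) and that the tensor decomposition $T = \sum_i A_i\otimes I_n\otimes L_i$ is legitimate — both follow from the fact that the $A_i$ act only on the coefficient factor while the $L_i$ act only on the Fock-space factor, together with the computation above which gives the bound $\|\sum_i A_i^\ast A_i\|^{1/2}$ on a dense domain of polynomial $F$. One may wish to note the parallel statement for right multiplication (which would yield $\|A\|_{row}$ instead), but that is not needed for this lemma.
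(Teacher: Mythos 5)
Your proof is correct and follows essentially the same route as the paper: identify the multiplication operator as $\sum_j A_j\otimes L_j$, compute $T^\ast T$ using the relations $L_i^\ast L_j=\delta_{ij}I$, and take operator norms to obtain $\|A\|_{col}^2$. The only cosmetic difference is that you realize the coefficient Hilbert space as $\C^m\otimes\C^n$ whereas the paper works directly with $M_m(\C)$ under the tracial inner product and observes that $B\mapsto A_jB$ has norm $\|A_j\|_{op}$; these are the same computation.
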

\begin{proof}
Writing $L_i$ for the left shifts as above, the operator $F\to PF$ acting in the Hilbert space tensor product $M_m\otimes \cF_d$ is given by $\sum_{j=1}^d A_j\otimes L_j$. (Here we identify $A_j$ with the operator $B\mapsto A_jB$ acting in the Hilbert space $M_m(\C)$.) From definitions, the operator $B\mapsto CB$ in the tracial Hilbert space $M_m(\C)$ has norm equal to $\|C\|_{op}$. Since the $L_j$ are isometries with orthogonal ranges, the relations $L_i^*L_j=\delta_{ij}$ entail
\begin{align*}
\norm{\brkt{\sum_{j=1}^d A_j\otimes L_j}}^2 
    &= \norm{\brkt{\sum_{j=1}^d A_j\otimes L_j}^*\brkt{\sum_{j=1}^d A_j\otimes L_j}} \\
    &=\norm{\sum_{j=1}^d (A_j^*A_j)\otimes I} 
        = \norm{\sum_{j=1}^d (A_j^*A_j)}_{op} \\
    &=\norm{A}_{col}^2.
\end{align*}

\end{proof}
\iffalse
	\fi
%	When the matrix size is understood, we simply write $\norm{\cdot}_F$ in lieu of $\norm{\cdot}_{F,k\times \ell}$.
	\vskip.2in
\iffalse	

\fi
\iffalse

\fi

\subsection{Stable equivalence and linearization}\label{sec:stabeq}
\begin{definition}
An\, $m\times\ell$\, nc\, linear pencil (in $d$ indeterminates) is an expression of the form
\[L_A(\bbx)=A_0+A\bbx\] where $A\bbx=A_1x_1\,+\,\cdots\,+\,A_dx_d$ and $A_i\in M_{m\times\ell}(\C)$ for $i = 1, \dots, d$. If $A_0=I$ \,then\, we call \,$L_A(\bbx)$\, a monic linear pencil. A matrix tuple $A = (A_1,\dots, A_d)\in M_m(\C)^d$ is \textbf{irreducible} if $A$ generates $M_m(\C)$ as a (unital) algebra, i.e. 
	$M_m(\C) = \set{p(A) \, : \, p\in \C\fralg{\bbx}}$.
	The monic linear pencil $L_A$ is \textbf{irreducible} if $A$ is irreducible.
\end{definition}

The protagonist of our proof is stable associativity. Let us define that as well:
\begin{definition}
Given\, $A\in M_{k\times k}(\C\fralg{\bbx})$ and $B\in M_{\ell\times \ell}(\C\fralg{\bbx})$.\newline
We say $A$ and $B$ are \textbf{stably associated} if  there exists $\, N\in \ZZ^+$ and $P,Q\in \GL_N(\C\fralg{\bbx})$%\mathrm is used for roman style 
\,	such\, that
	\[
		P\bpm A & \\ & I \epm Q = \bpm B & \\ & I \epm.
	\]\vspace{2mm}
	
We use the notation $A\sim_{\mathrm{sta}} B$ for $A$ being stable associated to $B$; it is evident that this is an equivalence relation. 
\end{definition}

\begin{definition}
If $f\in M_{k\times k}(\C\fralg{\bbx})$, then \[\sZ_n(f) = \set{X\in M_n(\C)^d \, : \, \det(f(X)) = 0},\] and \[\sZ(f) :=\bigsqcup_{n\geq1}\sZ_n(f).\]
	The set $\sZ(f)$ is the \textbf{free zero locus} of $f$.
\end{definition}
It is also evident from the definitions that if $f\sim_{sta} g$ then $\sZ(f)=\sZ(g)$. The concept of stable associativity is useful to us because of the following classical fact (we refer to \cite{HKVfactor} for a discussion of this in the context of factorization of free polynomials):
	\begin{lemma}[Linearization trick]\label{linearization trick}
		Suppose $F\in M_k(\C\fralg{\bbx})$ and $F(0) = I$. Then $F$ is stably associated to a monic linear pencil. Moreover, by Burnside's theorem we get that every monic pencil is similar to a pencil of the form
		\[ L_A(x)=
\begin{bmatrix}
 L_1(x) & * & * & \hdots & * \\
 0 & L_2(x) & * & \hdots & * \\
 0 & 0 & L_3(x) &  \hdots & * \\
 \vdots & \vdots &\vdots & \ddots  & \vdots\\
 0 & 0 & 0 & \hdots & L_l(x)
\end{bmatrix}
\]
where for every $k$, $L_k(x) = I-\sum_{j=1}^d \Bar{A}^{(k)}_jx_j = I-\Bar{A}^{(k)}x$ for  $\Bar{A}^{(k)}=(\Bar{A}^{(k)}_1\: \Bar{A}^{(k)}_2\: \cdots \: \Bar{A}^{(k)}_d)$ and either $\Bar{A}^{(k)}=0$ or irreducible.
\end{lemma}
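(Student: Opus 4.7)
The proof splits naturally along the two assertions. First, I would show that any $F \in M_k(\C\fralg{\bbx})$ with $F(0) = I$ is stably associated to a monic linear pencil; this is the classical Higman linearization trick. Then I would apply Burnside's theorem iteratively to conjugate any such pencil into the prescribed block upper triangular form.

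For the first step, induct on the maximum monomial degree appearing in the entries of $F$. Write $F = I - P$ with $P(0) = 0$, and for each entry split its nonconstant monomials into products $uv$ with $\deg u, \deg v \geq 1$. Standard elementary row and column operations, performed by matrices of the form $I \pm E$ with $E$ strictly off-diagonal (hence invertible elements of $\GL_N(\C\fralg{\bbx})$), allow one to trade an entry $uv$ for a pair of entries $u$ and $v$ in a larger block matrix whose new diagonal entry is $I$. After padding with identity blocks and reassembling, the resulting matrix is still monic but its maximum entry degree has strictly decreased; iterating terminates at a monic linear pencil $L_A(x) = I - Ax$. Stable association is preserved throughout since every manipulation is carried out by an element of $\GL_N(\C\fralg{\bbx})$.

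For the second step, let $L_A(x) = I - Ax$ be a monic linear pencil with coefficient tuple $A = (A_1, \dots, A_d)$ acting on $\C^m$. If $A$ is zero or irreducible, take $\ell = 1$ and we are done. Otherwise, by Burnside's theorem the unital algebra generated by the $A_j$ is a proper subalgebra of $M_m(\C)$, so it admits a nontrivial common invariant subspace $V \subsetneq \C^m$. Choose a basis compatible with $V$; conjugation by the corresponding constant $S \in \GL_m(\C)$ is a stable association which simultaneously puts every $A_j$, and hence $L_A(x)$, into block upper triangular form with smaller monic linear pencils on the diagonal. Recursing on each of the (strictly smaller) diagonal blocks terminates and produces the claimed form in which each $L_k(x)$ is either zero or irreducible.

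The main obstacle is the bookkeeping in the linearization step: one must verify carefully that each Higman-style manipulation preserves monicity (the condition $F(0) = I$), so that the induction remains inside the hypothesis class and the final pencil is itself monic, and that the degree-splitting $uv$ of monomials can be arranged uniformly across entries of a matrix-valued $F$. The Burnside step is essentially painless, amounting to the familiar argument that any finite-dimensional representation of a unital algebra admits a composition series and therefore a simultaneous block-triangularization.
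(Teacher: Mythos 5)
Your proposal is correct and matches the route the paper indicates: the paper does not actually prove this lemma but records it as a classical fact (citing \cite{HKVfactor}), illustrates the linearization step with the $1\times 1$ example $1-xy$ via Higman's trick, and explicitly says the full lemma follows ``by iteratively applying this trick,'' with the block-triangular form coming from Burnside. Your two-stage argument (iterated Higman manipulations by elementary $\GL_N(\C\fralg{\bbx})$ operations, then a composition series via Burnside giving a constant similarity into block upper-triangular form with zero-or-irreducible diagonal pencils) is exactly this outline. One small imprecision worth noting: a single Higman split does not decrease the maximum entry degree of the matrix (there may be several monomials of that degree); the correct induction quantity is, say, the number of monomials of maximal degree across all entries, and the maximum degree drops only once all of these have been processed. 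This is the bookkeeping issue you flag at the end, and it is resolved in the standard way.
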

		
For example, the $1\times 1$ polynomial $F(x,y)=1-xy$ is stably associated to the $2\times 2$ monic linear pencil $\bpm 1 & x \\ y & 1\epm$ via 
\[
		\bpm 1 & x \\ 0 & 1 \epm \bpm 1 - xy & 0 \\ 0 & 1 \epm \bpm 1 & 0 \\ y & 1 \epm = \bpm 1 & x \\y & 1 \epm.
	\]
(This calculation is sometimes known as ``Higman's trick.'') In fact the full lemma can be proved by iteratively applying this trick (increasing the size of the matrices at each step) to gradually reduce the degree of each monomial appearing in the entries of $F$. 

%\begin{definition}
%For $X\in M_{k\times k}(\C)^d$ \textbf{outer spectral radius} $\rho(X)$ is
%	\[
%		\rho(X) := \Big{[}\mbox{rad}(\sum_{i=1}^d X_i\otimes \overline{X_i})\Big{]}^{1/2}.
%	\]
%\end{definition}

%\palak[caption={Added definitions},inline]{I think I  have added all the definitions here which would be required for the proof. Ofcourse we'd need to add introductory lines before each definition. In this preliminaries section, I am not sure how much should be kept. We definitely need Lemma 1.14 for the proof. Should that be kept in preliminary discussion or with the Lemma 1.17? \newline
%Similarly properties in subsection 1.2: I don't think that is required as well.}
%\subsection{Preliminary Results}
	
%	Given $A\in M_{k\times \ell}(\C\fralg{\bbx})$ and $B\in M_{m\times n}(\C\fralg{\bbx})$, we say $P$ and $Q$ are \textbf{stably associated} 
%	and we write $P\sim_{\mathrm{sta}} Q$ if there exists $N\in \ZZ^+$ and invertible $P,Q\in \GL_N(\C\fralg{\bbx})$\footnote{
%		Note that one-sided invertibility of a square matrix over $\C\fralg{\bbx}$ implies two-sided invertibility because $\C\fralg{\bbx}$ 
%		is a fir (free ideal ring) so it can be embedded into a skew field (i.e. a noncommutative field or division algebra).
%		The idea is the same as in the commutative case: since we can embed into a skew field, we have access to Gaussian elimination, and we 
%		are basically done.
%	}
%	such that
%	\[
%		P\bpm A & \\ & I \epm Q = \bpm B & \\ & I \epm.
%	\]
	We say $F\in M_k(\C\fralg{\bbx})$ is \textbf{regular} if it is not a zero divisor. In particular, if $F(0) = I$ then $F$ is regular.
	A regular non-invertible matrix polynomial $F$ is an \textbf{atom} if it is not the product of two non-invertible matrices in $M_k(\C\fralg{\bbx})$.
	When $k=1$, every nonzero polynomial is regular and a nonconstant polynomial $p$ is an atom if it is not the product of two nonconstant 
	polynomials. 
	If $F_1,F_2$ are regular matrices over $\C\fralg{\bbx}$ (not necessarily of the same size) and $F_1\sim_{\mathrm{sta}} F_2$, then $F_1$ 
	is an atom if and only if $F_2$ is an atom. The following lemma from \cite{HKVfactor} relates irreducibility of the polynomial $F$ to irreducibility of the pencil $L_A$. 
	
	%If $A = (A_1,\dots, A_d)\in M_n(\C)^d$, then $L_A = I_m - \sum_{i=1}^d A_ix_i$ is a \textbf{monic linear pencil}.

	%The motivation for all of this is the following result from \cite{HKVfactor}.

	\begin{lemma}\cite[Lemma 4.2]{HKVfactor}
		If $F\in M_k(\C\fralg{\bbx})$ is an atom and $F(0)=I$, then $F$ is stably associated to an irreducible monic linear pencil $L$.
		That is, if $F$ is an atom, then there exists an irreducible tuple $A\in M_m(\C)^d$ such that $F$ is stably associated to $L_A = I_m 
		- \sum_{i=1}^d A_ix_i$.
	\end{lemma}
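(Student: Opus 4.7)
The plan is to combine the linearization trick (Lemma~\ref{linearization trick}) with the preservation of atomhood under stable association. By that lemma, $F \sim_{\mathrm{sta}} L_A$ for a block upper triangular monic linear pencil $L_A$ whose diagonal blocks $L_j = I_{m_j} - \bar{A}^{(j)} x$ are each either $I$ (when $\bar{A}^{(j)} = 0$) or irreducible. Since $F$ is an atom, so is $L_A$, and the goal reduces to showing that exactly one of the $\bar{A}^{(j)}$ is irreducible---all others being zero.

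First, at least one diagonal block must be irreducible: if every $\bar{A}^{(j)} = 0$, then $L_A = I + N$ with $N$ strictly block upper triangular and linear in $x$, hence $N^{\ell} = 0$, so the finite Neumann series $\sum_{k=0}^{\ell-1}(-N)^k$ is a polynomial inverse of $L_A$, contradicting that atoms are non-invertible. Next, at most one block is irreducible. Suppose for contradiction that two diagonal blocks of $L_A$ are irreducible; picking an index $r$ strictly between the first and last irreducible position and splitting $L_A$ accordingly yields
\[
L_A = \bpm L_A^{(1)} & C \\ 0 & L_A^{(2)} \epm = \bpm I & 0 \\ 0 & L_A^{(2)} \epm \bpm L_A^{(1)} & C \\ 0 & I \epm,
\]
where both $L_A^{(1)}$ and $L_A^{(2)}$ are block upper triangular monic pencils inheriting at least one irreducible diagonal block. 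For any matrix tuple $X$, block-upper-triangularity gives $\det L_A^{(i)}(X) = \prod_j \det L_j(X)$, so $L_A^{(i)}(X)$ is singular as soon as any diagonal block $L_j(X)$ is; and an irreducible pencil $I - \bar{A}x$ with $\bar{A}\neq 0$ has nonempty free zero locus, because the algebra $\C\fralg{\bar{A}_1,\dots,\bar{A}_d}=M_m(\C)$ is not nilpotent, which forces some matrix tuple $X$ to make $I - \sum_i \bar{A}_i \otimes X_i$ singular. Thus neither $L_A^{(1)}$ nor $L_A^{(2)}$ is polynomial-invertible, and the displayed factorization expresses $L_A$ as a product of two non-units in $M_N(\C\fralg{\bbx})$, contradicting atomhood.

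Having shown $L_A$ has a unique irreducible diagonal block $L_k$ (all others equal to $I$), it remains to establish $L_A \sim_{\mathrm{sta}} L_k$. The submatrices of $L_A$ lying strictly above, respectively strictly below, the block position of $L_k$ are themselves block upper triangular monic pencils with all identity diagonal blocks, hence unipotent and polynomial-invertible by finite Neumann series. Multiplying $L_A$ on the left and right by polynomial-invertible matrices of the form $\bpm I & X \\ 0 & I \epm$ built from these inverses clears all the off-diagonal entries of $L_A$, reducing it to the block diagonal form $I \oplus L_k \oplus I$. Absorbing the identity summands yields $L_A \sim_{\mathrm{sta}} L_k$, and $L_k$ is by construction an irreducible monic linear pencil, proving the lemma.

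The most delicate step is the non-invertibility claim for $L_A^{(1)}$ and $L_A^{(2)}$ via the non-empty-free-zero-locus fact for irreducible pencils; everything else is a clean exercise in block matrix manipulation over $\C\fralg{\bbx}$.
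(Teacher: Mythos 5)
Since the paper quotes this lemma directly from \cite{HKVfactor} without reproducing its proof, there is no in-paper argument to compare your proposal against. Your reconstruction is correct and natural: linearize via Lemma~\ref{linearization trick}, use preservation of atomhood under stable association, argue that an atomic block upper-triangular monic pencil must have exactly one irreducible diagonal block (all the rest being $I$), and then clear off-diagonal entries by polynomial-invertible multipliers to land on $L_k$. The pivotal step is exactly the one you identify: an irreducible monic pencil $I-\bar{A}x$ with $\bar{A}\ne 0$ has nonempty free zero locus (as the paper notes, irreducible implies not jointly nilpotent), and this propagates up through the block upper-triangular factors $L_A^{(1)}$ and $L_A^{(2)}$, forcing the displayed factorization to be a product of two non-units, contradicting atomhood.

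Two small imprecisions, neither fatal. First, ``picking an index $r$ strictly between the first and last irreducible position'' may fail to produce a split point when the two irreducible positions $i_1 < i_2$ are adjacent; what you want is $i_1 \le r < i_2$, which always exists and still places one irreducible block on each side of the split. Second, after clearing all off-diagonal entries of $L_A$ with unit-triangular row and column operations, the result is $U \oplus L_k \oplus V$, not $I\oplus L_k\oplus I$, where $U$ and $V$ are the (unipotent) sub-pencils sitting above and below $L_k$; you then need one more multiplication by $U^{-1}\oplus I\oplus V^{-1}$, which is itself a matrix polynomial because $U,V$ are unipotent, but is not of the form $\bpm I & X \\ 0 & I\epm$ in the coarse block structure. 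Since all multipliers used are polynomial-invertible, $L_A \sim_{\mathrm{sta}} L_k$ still follows, so the conclusion stands.
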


	%\newpage
%	Another idea that is relevant to our interests is that any monic linear pencil $L_A$ there exists an invertible matrix $S$ such that
%	\[
%		S^{-1}L_A S = 
%			\bpm
%				L_1 & * & \dots & * \\
%				& \ddots & \ddots & \vdots \\
%				& & \ddots & \vdots \\
%				& & & L_n
%			\epm,
%	\]
%	where each $L_i$ is either the identity $I$, or an irreducible monic linear pencil.
	
	It is worth noting that in general, $L_A$ is not guaranteed to have a nonempty free zero locus.
	In fact, $\sZ(L_A) = \varnothing$ if and only if $A$ is a jointly nilpotent tuple (i.e. there exists $N\in \NN$ such that $w(A) = 0$ for 
	all $\abs{w}\geq N$).
	However, if $A$ is irreducible, then $A$ cannot be jointly nilpotent, thus $\sZ(L_A) \neq \varnothing$.

	If $F$ is stably associated to $L_A$, then for any invertible $S$ of the same size as $A$, $F$ is stably associated to $L_{S^{-1}AS}$.

The following lemma shows that our rate-of-decay problem is invariant under stable associativity. Precisely: 

	\begin{lemma}\label{same ROD}
		Suppose $F$ and $G$ are matrix polynomials.
		If $F$ and $G$ are stably associated, then there exists $C_1,\,C_2\in \RR$ and $D_1, D_2\in \NN$ such that for $P_n$,\,$Q_n$, the $n$th degree OPAs of $F$ and $G$ respectively, the following holds: %$P$ (of the 
		%same size as $F$), there exists a matrix polynomial $Q_1$ and $Q_2$ (of the same size as $G$) such that $\deg(Q_1)\leq \deg(P)+D_1$, $\deg(Q_2)\leq \deg(P)+D_2$ and
		%\[
		%C_2 \norm{Q_2 G - I}_F^2 \leq \norm{PF - I}_F^2 \leq C_1\norm{Q_1 G - I}_F^2.
	    %\]
	    \[
		C_1\norm{P_{n+D_1}F - I}_F^2 \leq \norm{Q_nG- I}_F^2 \leq C_2\norm{P_{n-D_2}F - I}_F^2.
	    \]
	\end{lemma}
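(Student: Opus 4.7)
The plan is to transport any OPA for one of $F, G$ into a near-optimal polynomial approximation of the other by conjugating with the stable-associativity data, and then invoke optimality of the OPA on the other side. Fix $N \in \ZZ^+$ and $P, Q \in \GL_N(\C\fralg{\bbx})$ realizing the stable associativity, so that $P \tilde F Q = \tilde G$, where $\tilde F := F \oplus I_{N-k}$ and $\tilde G := G \oplus I_{N-\ell}$. Because $P, Q, P^{-1}, Q^{-1}$ are all matrix polynomials, left- and right-multiplication by any of them is a bounded operator on the appropriate matrix-valued Fock space, and the multiplier coming from the inverse polynomial is a two-sided bounded inverse.

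For the left-hand inequality I would start from the $n$-th OPA $Q_n$ of $G$, form the padded matrix polynomial $R_n := Q_n \oplus I_{N-\ell}$ (of degree $\leq n$), and observe that
\[
\|R_n \tilde G - I_N\|_F = \|Q_n G - I_\ell\|_F,
\]
since the bottom-right block of $R_n \tilde G$ is exactly $I_{N-\ell}$. Using $\tilde F = P^{-1} \tilde G Q^{-1}$, define $\tilde P_n := Q R_n P$, an $N \times N$ polynomial of degree at most $n + D_1$ with $D_1 := \deg P + \deg Q$, and compute
\[
\tilde P_n \tilde F - I_N = Q(R_n \tilde G - I_N) Q^{-1},
\]
so that $\|\tilde P_n \tilde F - I_N\|_F \leq M \|Q_n G - I_\ell\|_F$, with $M$ the product of the multiplier norms of $Q$ (on the left) and $Q^{-1}$ (on the right). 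Extracting the upper-left $k \times k$ block of $\tilde P_n$ gives a polynomial $A_n$ of degree $\leq n + D_1$ with $\|A_n F - I_k\|_F \leq \|\tilde P_n \tilde F - I_N\|_F$, since block projection does not increase the Frobenius norm on coefficients. By optimality of $P_{n+D_1}$,
\[
\|P_{n+D_1} F - I_k\|_F^2 \leq M^2 \|Q_n G - I_\ell\|_F^2,
\]
which is the left-hand inequality with $C_1 = 1/M^2$.

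The right-hand inequality is the mirror-image argument: starting from the OPA $P_n$ of $F$, form $R_n := P_n \oplus I_{N-k}$, and this time use $\tilde G = P \tilde F Q$ to define $\tilde Q_n := Q^{-1} R_n P^{-1}$, a polynomial of degree at most $n + D_2$ with $D_2 := \deg(P^{-1}) + \deg(Q^{-1})$. The identity $\tilde Q_n \tilde G - I_N = Q^{-1}(R_n \tilde F - I_N) Q$, combined with extraction of the upper-left $\ell \times \ell$ block and the optimality of $Q_{n+D_2}$, yields $\|Q_{n+D_2} G - I_\ell\|_F^2 \leq C_2 \|P_n F - I_k\|_F^2$ for a suitable constant $C_2$ built from the multiplier norms of $Q^{-1}$ and $Q$; this is the right-hand inequality after reindexing $n \mapsto n - D_2$.

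No individual step here is deep. The main challenge is bookkeeping: carefully tracking sizes, which side each multiplication acts on, and checking that all the multiplier norms in sight are finite. That last point is exactly what stable associativity (as opposed to mere invertibility of $F$ in some formal power series completion) provides: since $P, Q, P^{-1}, Q^{-1}$ are genuine polynomials, they yield bounded left and right multipliers on the matrix-valued Fock space with bounded inverses, and the computations above close up cleanly with explicit constants.
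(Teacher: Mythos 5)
Your proof is correct and follows essentially the same strategy as the paper: use the stable-associativity data to conjugate a padded OPA from one side into a near-optimal approximation on the other side, project to the relevant block (using monotonicity of the Frobenius norm under block restriction), and invoke optimality. The only difference is cosmetic — the paper starts from $P_n$ and derives the right-hand inequality directly, then gets the other by swapping roles, while you start from $Q_n$ and derive the left-hand inequality first — so the two are mirror images of the same argument.
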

%\palak{I think this above inequality is not entirely accurate! Does the following statement work? It is a bit rough though.}
%\begin{lemma}
%	Suppose $F$ and $G$ are matrix polynomials.
%		If $F$ and $G$ are stably associated, then there exists $C_1,\,C_2\in \RR$ and $D_1,\,D_2\in \NN$ such that for $P_n$, the $n$th degree OPAs of $F$, there exists a matrix polynomial $Q$ (of the same size as $G$) with deg($Q$) $\leq \,n + D_1$ and 
 %    	\[	\norm{QG- I}_F^2 \leq C_1\norm{P_{n}F - I}_F^2.\]
		
%	    In particular we have,	
%	    \[	\norm{Q_{\mbox{deg}(G)}G- I}_F^2 \leq C_1\norm{P_{n}F - I}_F^2.\]
%	    where $Q_n$ is the $n$th degree OPAs of $G$.\newline
%	    Consequently, for $m \leq \mbox{deg}(G) + D_2$,
	    
%	    \[
%		C_2 \norm{P_mF - I}_F^2 \leq \norm{Q_{\mbox{deg}(G)}G- I}_F^2 \leq C_1\norm{P_{n}F - I}_F^2.
%	    \]
%\end{lemma}

	\begin{proof} 

		Since $F$ and $G$ are stably associated, there exists $N\in \ZZ^+$ and $A,B\in \GL_N(\C\fralg{\bbx})$ such that
		\[
			(F\oplus I) = A(G\oplus I)B.
		\]
	Since $B$ and $B^{-1}$ are both polynomials, it follows that the map $H\mapsto BHB^{-1}$ is bounded in $\cF_d$, so there there exists $C_1\in \RR$ such that 
		$\norm{BHB^{-1}}_F\leq C_1\norm{H}_F$ for all $H\in M_N(\C\fralg{\bbx})$.
		Next, set $D_1 = \deg(A) + \deg(B)$ (here, $\deg(A)$ is the maximum of the degrees taken over the entries of $A$).
		
		Observe
		\begin{align*}
			B\bpm P_n & \\ & I \epm A \bpm G & \\ & I \epm - \bpm I & \\ & I \epm
				&= B\left(\bpm P_n & \\ & I \epm A \bpm G & \\ & I \epm B - \bpm I & \\ & I\epm\right)B^{-1} \\
				&= B\left(\bpm P_n & \\ & I \epm \bpm F & \\ & I \epm  - \bpm I & \\ & I\epm\right)B^{-1}.
		\end{align*}
		Hence,
		\begin{align*}
			\Norm{B\bpm P_n & \\ & I \epm A \bpm G & \\ & I \epm - \bpm I & \\ & I\epm}_F^2
				&\leq C\Norm{\bpm P_n & \\ & I \epm \bpm F & \\ & I \epm - \bpm I & \\ & I\epm}_F^2 \\
				&= C\Norm{P_nF - I}_F^2
		\end{align*}
		The matrix $B(P_n\oplus I)A$ has a block structure of $\bsbm Q & * \\ * & * \esbm$, and we note that the nature of the Frobenius norm 
		implies that
		\[
			\norm{QG-I}_F^2\leq \Norm{\bpm Q & * \\ * & * \epm \bpm G & \\ & I \epm - \bpm I & \\ & I \epm}_F^2.
		\]
		Thus,
		\[
			\norm{QG-I}_F^2\leq C\norm{P_nF-I}_F^2.
		\]
		Finally, note that 
		\[
			\deg(Q)\leq \deg(B(P_n\oplus I)A) \leq \deg(B) + \deg(P_n) + \deg(A) = \deg(P_n) + D_1.
		\]
		For $n$th degree OPA,  $Q_n$ of $G$, $\|Q_nG-I\|$ decreases with $n$, thus, we can write 
		\[\norm{Q_nG- I}_F^2 \leq C_1\norm{P_{n-D_1}F - I}_F^2.\]

		 %It suffices to prove one-side of the inequality since the $2$-sided inequality follows from the fact that stable association is an equivalence relationship.
		 
		 The other inequality follows by interchanging the roles of $F$ and $G$, and chasing through the proof, we find that in this case we can take $D_1=\deg(A^{-1})+\deg(B^{-1})$. 
	\end{proof}

	%A simple consequence of the Lemma is the following:  if $F$ and $G$ are stably associated and $P_n$ and $Q_n$, then there exist $C_1,C_2\in \RR$ and $D_1,D_2\in \NN$ such that for all $n$, we have
	%\[
	%	C_2 \norm{P_{n+D_2}F - I}_F^2 \leq \norm{Q_nG- I}_F^2 \leq C_1\norm{P_{n-D_1}F - I}_F^2.
	%\]
	%where $P_m$ and $Q_m$ are the degree $m$ OPAs for $F$ and $G$ respectively. Thus, {\em rate of decay is preserved by stable associativity}. 

%	As we saw before, every atomic matrix polynomial $F$ such that $F(0) = I$ is stably associated to an irreducible monic linear pencil, so 
%	we can investigate irreducible monic linear pencils as a way forward.

\subsection{The Outer Spectral Radius}\label{sec:OuterSpecRad}
	The last technical tool we will need is the {\em outer spectral radius}, which is one of several possible notions of a ``joint spectral radius" for a system of matrices $A=(A_1, \dots, A_d)$. 
	\begin{definition}
	Let $n\in\mathbb{N}$ and $X\in M_{n\times n}(\C)^d$. We associate to $X$ the completely positive map on $M_{n\times n}$ defined by 
	\[
		\Psi_X(T) \,=\, \sum^d_{j=1}X_j T X_j^*.
	\]
    The \textbf{outer spectral radius} is defined to be $\rho(X) \,:=\,\lim_{k\to \infty} \|\Psi_X^k(I)\|^{1/2k}$ (that is, the square root of the spectral radius of $\Psi_X$ viewed as a linear transformation on $M_n$).
	\end{definition}
	
	\begin{remark}
	Observe that if we equip $M_n(\mathbb{C})$ with tracial inner product then $\Psi_X^*=\Psi_{X^*}$, so $\rho(X)=\rho(X^*)$. 
	\end{remark}
%	\begin{Remarks}
	There are two other equivalent definitions of outer spectral radius \cite{MR4344050}:
\iffalse
  
\fi
%	\end{Remarks}
	
	Several properties of the outer spectral radius are pointed out in \cite{SSS20} Section 4;
	\begin{enumerate}
		\item $\rho(X) = \rho(S^{-1}XS)$ for any $S\in \GL_n(\C)$, follows from the definition $2.16$;
		\item $\rho(X)=\rho(X^*)$, follows from remark $2.17$;
		\item $\rho(X)<1$ if and only if $X$ is similar to an element of the row ball, this follows from Theorem $3.8$, \cite{PopescuSimilaritypoly} and Lemma $2.5$,  \cite{SSS20};
		\item if $X$ is irreducible, then $\rho(X)=\min\set{\Norm{S^{-1}XS} \, : \, S\in \GL_n(\C)}$, follows from Lemma $2.4$, \cite{SSS20};
		\item if $X$ is irreducible and $\rho(X) = 1$, then $X$ is similar to a row coisometry, from Lemma $2.9$. \cite{SSS20}. 
   \end{enumerate}

	We remark in passing that the last item is proved using the so-called ``quantum Perron-Frobenius theorem'' of Evans and H{\o}egh-Krohn \cite{EvansAndHK}.

	\begin{lemma}\label{M is contraction}
   If $A = (A_1,\dots,A_d)$ is irreducible with outer spectral radius $\rho(A)\leq 1$ then $A$ is similar to a column contraction. 
    \end{lemma}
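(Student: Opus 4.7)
My plan is to dualize and apply the structural results (3) and (5) of the outer spectral radius to $A^*$ rather than to $A$ itself.

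The reduction rests on the identity $\|X\|_{col} = \|X^*\|_{row}$: for any $S \in \GL_n(\C)$, setting $T := S^{-*}$ gives
\[
T^{-1} A T \;=\; S^* A S^{-*} \;=\; (S^{-1} A^* S)^*,
\]
so $\|T^{-1} A T\|_{col} = \|S^{-1} A^* S\|_{row}$. Thus $A$ is similar to a column contraction if and only if $A^*$ is similar to a row contraction, and I can work exclusively with the latter.

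To produce such a similarity for $A^*$, I would first note that $A^*$ is still irreducible (the unital algebra generated by $A^*$ is the $*$-image of the one generated by $A$, which equals $M_n(\C)$) and that $\rho(A^*)=\rho(A)\leq 1$ by the preceding remark. In the strict case $\rho(A^*)<1$, property (3) of Section~\ref{sec:OuterSpecRad} supplies an $S$ with $\|S^{-1}A^*S\|_{row}<1$, while in the boundary case $\rho(A^*)=1$, property (5) supplies an $S$ for which $S^{-1}A^*S$ is a row coisometry and therefore a row contraction. Either way, $A^*$ is similar to a row contraction, and the reduction in the previous paragraph finishes the proof.

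The only real obstacle is the small amount of bookkeeping needed to convert between column norms of conjugates of $A$ and row norms of conjugates of $A^*$; all of the substantive analytic content is packaged inside properties (3) and (5) (and, ultimately, the quantum Perron--Frobenius theorem invoked in~\cite{SSS20} to establish~(5)). An alternative, slightly more unified route would apply property (4) directly to $A^*$, since (4) already gives an attained minimum $\rho(A^*)=\min_S\|S^{-1}A^*S\|_{row}\leq 1$.
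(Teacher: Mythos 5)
Your proof is correct and takes essentially the same route as the paper's: pass to $A^*$, note it inherits irreducibility and $\rho(A^*)=\rho(A)\leq 1$, invoke properties (3) and (5) to get a similarity of $A^*$ to a row contraction, and then take adjoints to transfer this to a column-contractive similarity of $A$. Your explicit split into the strict and boundary cases is exactly what the paper's citation of both (3) and (5) encodes, and your aside about property (4) is a valid shortcut the paper does not use.
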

%    \palak[caption = Shoud 2 lemmas be combined into 1?]{Should I change this lemma to Monic Pencil + Non singular + irred implies $\sim$ 1-Mx? That is, combine JMS and SSS here!}
    \begin{proof}
    As $A$ is irreducible then $A^*$ is irreducible as well. Also by definition of spectral radius, $\rho(A^*)=\rho(A)\leq 1$. Then by properties $(3)$ and $(5)$ above, we have that $A^*$ is similar to a row contraction, say $X$. That is, there exists an invertible, $S$ such that $A^* = SXS^{-1}$, taking adjoints gives $A=S^{-*}X^*S^*$. If $X$ is a row contraction then $X^*$ is a column contraction, and so we see that $A$ is similar to a column contraction. 
    \end{proof}
We also require the following lemma from \cite{jury2020noncommutative}:
\begin{lemma} \cite[Proposition 4.1]{jury2020noncommutative}\label{lem:radius-1-no-zeroes}
A monic linear pencil $I-Ax$ is nonsingular in the row ball if and only if $\rho(A)\leq 1$. 
\end{lemma}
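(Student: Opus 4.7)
The plan is to establish the two implications separately: the forward direction by a similarity-plus-Neumann-series argument applied to $A$ directly, and the converse by reducing to an irreducible block via Burnside and then invoking the quantum Perron--Frobenius theorem.

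For $(\Leftarrow)$, fix $X$ with $\|X\|_{\mathrm{row}}<1$ and choose $r>1$ so that $r\|X\|_{\mathrm{row}}<1$, rewriting the pencil as $I-\sum (A_i/r)\otimes (rX_i)$. Since $\rho(A/r)=\rho(A)/r<1$, property~(3) applied to $(A/r)^*$, together with $\rho(B^*)=\rho(B)$, produces an invertible $S$ with $C:=S(A/r)S^{-1}$ satisfying $\|C\|_{\mathrm{col}}<1$. Conjugating the pencil by $S\otimes I$ reduces us to showing the operator norm $\|\sum C_i\otimes (rX_i)\|$ is strictly less than $1$; factoring this operator as a product of a row-block and a column-block (as in Lemma~\ref{lem:contractive-pencil}) yields the bound $\|C\|_{\mathrm{col}}\|rX\|_{\mathrm{row}}<1$, so Neumann inversion gives invertibility.

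For $(\Rightarrow)$, I argue the contrapositive: assuming $\rho(A)>1$, I exhibit $X$ in the open row ball with $I-\sum A_i\otimes X_i$ singular. Applying Lemma~\ref{linearization trick}, put $A$ into block upper triangular form with irreducible diagonal blocks $A^{(1)},\dots,A^{(\ell)}$, so the pencil's determinant factors as $\prod_k \det(I-\sum A_i^{(k)}\otimes X_i)$. Conjugation by $D_\delta=\mathrm{diag}(I,\delta I,\dots,\delta^{\ell-1}I)$ with $\delta$ large drives the off-diagonal blocks to zero while preserving $\rho$, and combined with property~(3) this gives $\rho(A)\leq \max_k\rho(A^{(k)})$ (the reverse inequality follows from the fact that the $(k,k)$-corner of $\Psi_A$ restricts to $\Psi_{A^{(k)}}$). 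Hence some $B:=A^{(k)}$ has $\rho(B)>1$; by property~(4), after a further similarity I may assume $\|B\|_{\mathrm{row}}=\rho(B)=:\rho$. The quantum Perron--Frobenius theorem applied to the irreducible positive map $\Psi_B$ delivers a positive eigenvector with eigenvalue $\rho^2$, and under the \emph{vec} isomorphism $M_m(\C)\cong\C^m\otimes\C^m$ the map $\Psi_B$ is conjugate to $\sum\overline{B_i}\otimes B_i$ (equivalently to $\sum B_i\otimes\overline{B_i}$ after the tensor swap). Taking $X_i:=\rho^{-2}\overline{B_i}$ then makes $1$ an eigenvalue of $\sum B_i\otimes X_i$, and gives $\|X\|_{\mathrm{row}}=\|B\|_{\mathrm{row}}/\rho^2=1/\rho<1$, so the $k$-th determinantal factor vanishes at an $X$ in the open row ball.

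The main difficulty I anticipate is the step $\rho(A)>1\Rightarrow \rho(A^{(k)})>1$ for some $k$: the easy inequality $\rho(A^{(k)})\leq \rho(A)$ is immediate from invariant-subspace considerations, but the converse uses the block-triangular geometry genuinely, and the $D_\delta$-scaling argument coupled with property~(3) is the cleanest route I see. A secondary subtlety is ensuring that the Perron--Frobenius $X$ lies in the \emph{open} row ball and not merely its closure; this is precisely why the strict inequality $\rho(B)>1$ is needed at the irreducible step, and why the normalization $\|B\|_{\mathrm{row}}=\rho(B)$ afforded by property~(4) is essential.
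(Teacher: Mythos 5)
The paper does not prove this lemma; it is cited as \cite[Proposition 4.1]{jury2020noncommutative}, so there is no in-text argument to compare against. Judged on its own, your reconstruction is essentially correct and is the natural argument, so I will just flag a few places where the wording is imprecise or where a shorter route exists.

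In the $(\Leftarrow)$ direction the $r$-rescaling, the passage to a column-contractive similar copy of $A/r$ via property~(3) applied to the adjoint, the factorization $\sum_i C_i \otimes Y_i = \bigl[\,I\otimes Y_1\ \cdots\ I\otimes Y_d\,\bigr]\bigl[\,C_1\otimes I,\dots, C_d\otimes I\,\bigr]^{\mathsf T}$ giving $\|\sum C_i\otimes Y_i\|\leq\|C\|_{\mathrm{col}}\|Y\|_{\mathrm{row}}$, and the Neumann series are all correct, and the $r>1$ trick correctly covers the boundary case $\rho(A)=1$.

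In the $(\Rightarrow)$ direction the key identity $\rho(A)=\max_k\rho(A^{(k)})$ for block upper triangular $A$ is correct, but your derivation is slightly cloudy. The invocation of property~(3) in the $D_\delta$-scaling step is unnecessary: since $\rho(D_\delta A D_\delta^{-1})=\rho(A)$ for every $\delta$ and $D_\delta A D_\delta^{-1}\to\bigoplus_k A^{(k)}$ as $\delta\to\infty$, continuity of $\rho$ (it is the square root of a spectral radius of a matrix that depends continuously on $A$) already gives $\rho(A)=\rho(\bigoplus_k A^{(k)})=\max_k\rho(A^{(k)})$, and this single limit handles both inequalities at once. Your remark that ``the $(k,k)$-corner of $\Psi_A$ restricts to $\Psi_{A^{(k)}}$'' is true (on matrices supported in the $(k,k)$-block), but by itself it only gives $\rho(A^{(k)})\leq\rho(A)$ after one additionally notes the Loewner-monotonicity $\Psi_A^n(I)_{kk}\geq\Psi_{A^{(k)}}^n(I)$; the $D_\delta$ limit is cleaner.

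In the final step, invoking the quantum Perron--Frobenius theorem is more than is needed: once $\|B\|_{\mathrm{row}}=\rho(B)=:\rho$ has been arranged via property~(4), any eigenvalue $\lambda$ of $\sum_i B_i\otimes\overline{B_i}$ with $|\lambda|=\rho^2$ (such exists by finite-dimensional spectral theory) gives, with $X_i:=\bar\lambda^{-1}\overline{B_i}$, a tuple with $\|X\|_{\mathrm{row}}=1/\rho<1$ at which $\det\bigl(I-\sum B_i\otimes X_i\bigr)=0$; positivity of the eigenvector plays no role. Using Perron--Frobenius is not wrong, just heavier than required. You correctly emphasize that the strict inequality $\rho(B)>1$ is what pushes $X$ into the open row ball, and that property~(4) supplying an exact minimizer is what makes the normalization possible. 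Overall the argument is sound and self-contained.
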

%	\begin{corollary}
%		Suppose $F$ is a matrix polynomial that is regular, $F = F_1\dots F_\ell$ is a factorization of $F$ into atoms (unique up to stable 
%		associativity), and each $F_i(0) = I$.
%		Then $F$ is nonsingular on the row ball.
%	\end{corollary}
%	
%	\begin{proof}
%		Our decomposition of $F$ into atoms tells us that 
%		\[
%			\sZ(F) = \sZ(F_1)\cup \dots \cup \sZ(F_\ell).
%		\]
%		Thus, $X\in \sZ(F)$ if and only if $X\in \sZ(F_i)$ for some $i$.
%		Since $F_i(0) = I$, each atomic factor $F_i$ is stably associated to $L_{A^i}$, where $A^i$ is a
%	\end{proof}

%\subsection{Rate of Decay}

\section{Proof of Theorem~\ref{mainRODthm-repeat} \label{sec:proof-of-main}}

The proof of the theorem consists of first proving it in the special case where $F$ is a contractive, monic linear pencil (accomplished by the following lemma), then reducing the general case to that one by means of the algebraic and analytic machinery of the previous section, and finishing by induction. 

Throughout the proof, for a matrix polynomial $P(x)=\sum_{w\in \fax} P_w x^w$ we will write $\|P\|_2$ for the Hilbert space norm $(\sum_{w\in\fax} tr(P_w^*P_w))^{1/2}$ and $\|P\|_{mult}$ for the norm of $P$ as a left multiplier $F\to PF$. 

\begin{lemma}\label{The Lemma}
	Assume that $M$ is column contraction: $\|M\|_{{col}}\leq 1$ and let $\pi_n$ be the one variable optimal polynomial approximants for $f=1-z$. Then
	\begin{enumerate}[(a)]
		\item \(\norm{\pi_n(Mx)}_{mult} \lesssim n\), and \label{infnorm}
		\item \(\norm{\pi_n(Mx)(I-Mx)-I}^2_2 \lesssim \frac{1}{n}.\) \label{twonorm}
	\end{enumerate}	
\end{lemma}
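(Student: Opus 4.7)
The plan is to first compute the one-variable OPA $\pi_n$ of $f(z)=1-z$ explicitly, then substitute the linear pencil $Mx=M_1x_1+\cdots+M_dx_d$ into the resulting identity and exploit two simple features: the orthogonality of homogeneous components in the Fock space, and the fact (from Lemma~\ref{lem:contractive-pencil}) that left multiplication by $Mx$ has multiplier norm $\|M\|_{\mathrm{col}}\leq 1$. The explicit formulas are standard and follow by writing out the Gram matrix for $\{(1-z)z^k\}_{k=0}^n$ (a tridiagonal system) or equivalently differentiating the quadratic $\|\pi_n(z)(1-z)-1\|^2$ with respect to the coefficients of $\pi_n$; the answer is
\[
\pi_n(z)=\sum_{k=0}^{n}\frac{n+1-k}{n+2}\,z^k,\qquad \pi_n(z)(1-z)-1=-\frac{1}{n+2}\sum_{k=0}^{n+1}z^k.
\]

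For part (b), I would substitute $z\mapsto Mx$ in the above identity to get
\[
\pi_n(Mx)(I-Mx)-I=-\frac{1}{n+2}\sum_{k=0}^{n+1}(Mx)^k.
\]
Because $(Mx)^k$ is supported on words of length exactly $k$, the summands on the right-hand side are pairwise orthogonal in $M_m(\C)\otimes\cF_d$, so the norm squared equals $(n+2)^{-2}\sum_{k=0}^{n+1}\|(Mx)^k\|_2^2$. To bound each $\|(Mx)^k\|_2$, note that $(Mx)^k=T^k\!\cdot\! I$, where $T$ is the left multiplication operator by $Mx$ and $I$ is the constant polynomial; by Lemma~\ref{lem:contractive-pencil}, $\|T\|=\|M\|_{\mathrm{col}}\leq 1$, and $\|I\|_2^2=\operatorname{tr}(I_m)=m$, yielding $\|(Mx)^k\|_2\leq\sqrt{m}$ uniformly in $k$. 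Putting this in gives $\|\pi_n(Mx)(I-Mx)-I\|_2^2\leq m/(n+2)\lesssim 1/n$.

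Part (a) is even more direct. Using the multiplier bound $\|T\|=\|M\|_{\mathrm{col}}\leq 1$ again, multiplication by $(Mx)^k$ has multiplier norm at most $1$ for every $k$. The triangle inequality applied to $\pi_n(Mx)=\sum_{k=0}^n \frac{n+1-k}{n+2}(Mx)^k$ then gives
\[
\|\pi_n(Mx)\|_{\mathrm{mult}}\leq \sum_{k=0}^{n}\frac{n+1-k}{n+2}=\frac{n+1}{2}\lesssim n.
\]
I do not foresee a genuine obstacle: once the explicit form of $\pi_n$ is in hand, the only ingredients are the orthogonality of homogeneous components of $\cF_d$ and the contractivity of the $Mx$-multiplier, both of which are already developed in Section~\ref{sec:preliminaries}. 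If anything, the mild subtlety is making sure one uses the column norm (not the row norm) for the left multiplier estimate—Lemma~\ref{lem:contractive-pencil} is stated exactly in this form.
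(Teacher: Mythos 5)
Your proof is correct and follows essentially the same route as the paper's: compute the explicit one-variable OPA $\pi_n$, substitute $z\mapsto Mx$, and use the orthogonality of the homogeneous components $(Mx)^k$ together with the contractivity of left multiplication by $Mx$ from Lemma~\ref{lem:contractive-pencil}. The only cosmetic difference is in part (a), where you use the triangle inequality while the paper invokes von Neumann's inequality; since the coefficients of $\pi_n$ are nonnegative, $\sum_k c_k = \|\pi_n\|_\infty = \frac{n+1}{2}$, so the two give the identical bound. Incidentally, your identity $\pi_n(z)(1-z)-1=-\frac{1}{n+2}\sum_{k=0}^{n+1}z^k$ is the correct one (the constant term is $-\frac{1}{n+2}$, not $0$); the paper's display starts the sum at $k=1$, a minor slip that does not affect the final bound.
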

\begin{proof}
	We begin by recalling the one variable commutative case \(f(z)=1-z\) where the \(H^2(\D)\) optimal polynomial approximants are given in~\cite{FMS14} by
	\begin{equation}
		\pi_n(z) = \sum_{k=0}^n \brkt{1-\frac{k+1}{n+2}}z^k
	\end{equation}
	To show (\ref{infnorm}), observe that
	\begin{equation}
		\norm{\pi_n}_\infty := \sup\{|\pi_n(z)|:|z|<1\}= \sum_{k=0}^n \brkt{1-\frac{k+1}{n+2}} = \frac{n+1}{2}. \label{FMSsupnorm}
	\end{equation}
	Because \(M\) is a column contraction, by Lemma~\ref{lem:contractive-pencil} we have that multiplication by \(Mx\) is a contraction, so we can apply von Neumann's inequality to see that
	\begin{equation*}
		\norm{\pi_n(Mx)}_{mult} \leq \norm{\pi_n}_\infty \lesssim n
	\end{equation*}
	as needed.

	For part (\ref{twonorm}), again consider the one variable case and use algebra to see that
	\begin{align}
		\pi_n(z)(1-z)-1 
			&= (1-z)\sum_{k=0}^n \brkt{1-\frac{k+1}{n+2}}z^k -1 \nonumber \\
			&= -\frac{1}{n+2} \sum_{k=1}^{n+1} z^k . \label{1voptimalnorm}
	\end{align}
Again, since multiplication by $Mx$ is contractive, we have $\|(Mx)^k\|_2\leq \|Mx\|_2:=c$ for all $k\geq 1$. Moreover, since the \((Mx)^k\) are orthogonal (they are homogeneous polynomials of different degrees), by the Pythagorean theorem we have
	\begin{align}
		\norm{\pi_n(Mx)(I-Mx)-I}_2^2
			&= \norm{-\frac{1}{n+2} \sum_{k=1}^{n+1} (Mx)^k}_2^2  \nonumber \\
			&= \frac{1}{(n+2)^2} \sum_{k=1}^{n+1} \norm{(Mx)^k}_2^2 \leq \frac{c^2(n+1)}{(n+2)^2}, \label{MXoptimalnorm}
	\end{align}
	which gives \(\norm{\pi_n(Mx)(I-Mx)-I}^2_2 \lesssim \frac {1}{n}\).
\end{proof}
%\ref{mainRODthm-repeat}):
Let us now prove Theorem~\ref{mainRODthm-repeat}. We recall the statement:

	\begin{theorem*}(Theorem \ref{mainRODthm-repeat})
    Let $F\in M_k(\C)\otimes \C\fralg{\bbx}$ be a $k\times k$-matrix valued free polynomial, let $(P_n)$ denote the sequence of optimal polynomial approximants, and put 
    \[
    c_n:=\|P_n(x)F(x)-I\|_2^2.
    \]
    Then: \begin{enumerate}
        \item $c_n\to 0$ if and only if $F$ is nonsingular in the row ball.
        \item If $F$ is nonsingular in the row ball, then 
        \[
        c_n\lesssim \frac{1}{n^p}
        \]
        for some $p>0$ depending on $F$. In particular, if $F$ is a product of $\ell$ atomic factors, we can take $p=\frac{1}{3^{\ell-1}}$.
    \end{enumerate}
    \end{theorem*}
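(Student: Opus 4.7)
The plan is to prove Theorem~\ref{mainRODthm-repeat} in two halves: the necessity of the row ball condition is exactly Theorem~\ref{nonsingular-is-necessary}, so the real content is the quantitative sufficient direction, which I will obtain by reducing to a monic upper triangular linear pencil and inducting on the number $\ell$ of irreducible diagonal blocks. To set up this reduction, note first that $0$ lies in the row ball, so nonsingularity at $0$ forces $F(0)$ to be invertible; replacing $F$ by $F(0)^{-1}F$, I may assume $F(0)=I$. By Lemma~\ref{linearization trick}, $F$ is stably associated to a monic upper triangular pencil $L_A$ whose diagonal blocks are each either $I$ or irreducible of the form $L_k=I-\bar A^{(k)}x$, and by Lemma~\ref{same ROD} it suffices to prove the rate of decay for $L_A$. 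Upper triangularity gives $\det L_A=\prod_k\det L_k$, so every diagonal block is itself nonsingular in the row ball; hence each irreducible $\bar A^{(k)}$ has $\rho(\bar A^{(k)})\leq 1$ by Lemma~\ref{lem:radius-1-no-zeroes} and is similar to a column contraction by Lemma~\ref{M is contraction}. Conjugating each diagonal block by the appropriate similarity (which preserves the stable associativity class), I may assume every $\bar A^{(k)}$ is a column contraction.

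For the base case $\ell=1$, $L_A=I-Ax$ with $A$ a column contraction, so Lemma~\ref{The Lemma} directly furnishes the approximant $\pi_n(Ax)$ of degree $\leq n$ satisfying $\|\pi_n(Ax)(I-Ax)-I\|_2^2\lesssim 1/n$ and $\|\pi_n(Ax)\|_{mult}\lesssim n$, giving rate exponent $p_1=1$ and multiplier norm exponent $q_1=1$. For the inductive step, split
\[
L_A=\begin{pmatrix} L_1 & B \\ 0 & L_{A'} \end{pmatrix},
\]
where $B$ is linear in $x$ with no constant term (hence $\|B\|_{mult}=O(1)$ by Lemma~\ref{lem:contractive-pencil}) and $L_{A'}$ is the $(\ell-1)$-block pencil. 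Let $\tilde P_1=\pi_{n_1}(\bar A^{(1)}x)$ and let $\tilde P_2$ be the degree-$n_2$ approximant provided by the inductive hypothesis, and form
\[
P=\begin{pmatrix} \tilde P_1 & -\tilde P_1 B\tilde P_2 \\ 0 & \tilde P_2 \end{pmatrix}.
\]
A direct computation gives
\[
PL_A-I=\begin{pmatrix} \tilde P_1 L_1-I & \tilde P_1 B(I-\tilde P_2 L_{A'}) \\ 0 & \tilde P_2 L_{A'}-I\end{pmatrix},
\]
whose squared Frobenius norm is at most a constant multiple of
\[
\tfrac{1}{n_1}+\|\tilde P_1\|_{mult}^2\,\|B\|_{mult}^2\,\|\tilde P_2 L_{A'}-I\|_2^2+\tfrac{1}{n_2^{p_{\ell-1}}}\lesssim \tfrac{1}{n_1}+\tfrac{n_1^2}{n_2^{p_{\ell-1}}}+\tfrac{1}{n_2^{p_{\ell-1}}}.
\]
Balancing the first two terms via $n_1=n_2^{p_{\ell-1}/3}$ yields rate $p_\ell=p_{\ell-1}/3=1/3^{\ell-1}$, and since $\deg P\leq n_1+1+n_2\sim n_2$, this gives $c_N\lesssim 1/N^{1/3^{\ell-1}}$ in terms of the total degree. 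Tracking multiplier norms gives $\|P\|_{mult}\lesssim n_1\cdot n_2^{q_{\ell-1}}=n_2^{q_\ell}$ with $q_\ell=q_{\ell-1}+1/3^{\ell-1}$, which is bounded by $3/2$ across the induction.

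The main obstacle, in my view, is propagating both the rate of decay and the multiplier norm estimate simultaneously through the induction: it is the $\|\tilde P_1\|_{mult}\lesssim n_1$ bound from Lemma~\ref{The Lemma}(a) that injects the factor $n_1^2$ into the off-diagonal error, and it is the tension between this factor and the inherited rate $p_{\ell-1}$ that forces the precise relation $p_\ell=p_{\ell-1}/3$. If one ignored the sharp multiplier bound of Lemma~\ref{The Lemma}(a) and used only crude estimates, the rate would degrade badly (or not even close). A secondary technical point, which should be straightforward but requires care, is verifying that the diagonal similarities used to reduce each $\bar A^{(k)}$ to a column contraction can be amalgamated into a single block-diagonal similarity that preserves the upper triangular form of $L_A$, so that the block-wise construction of $P$ goes through as written.
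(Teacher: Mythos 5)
Your proof is correct and follows essentially the same approach as the paper: linearize, reduce to a block upper triangular monic pencil with column-contractive diagonal blocks (via Lemma~\ref{linearization trick}, Lemma~\ref{lem:radius-1-no-zeroes}, Lemma~\ref{M is contraction}, and Lemma~\ref{same ROD}), then induct on the number of blocks by forming a $2\times 2$ block approximant with cross term $-\tilde P_1 B\tilde P_2$ and balancing degrees to force $p_\ell=p_{\ell-1}/3$. The only (minor) difference is that the paper peels off the bottom-right block and parametrizes the induction so the claimed error stays $\lesssim 1/n$ while the degree grows to $\sim n^{3^{\ell-1}}$, whereas you peel off the top-left block and track the rate in the degree directly; a small consequence is that your cross-term estimate uses only $\|\tilde P_1\|_{mult}$ from Lemma~\ref{The Lemma}, so you do not actually need to carry the multiplier-norm bound through the inductive hypothesis (the paper's version does, because it uses $\|\sigma_{n,\ell}\|_{mult}$ in its off-diagonal estimate).
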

    
%\begin{theorem}\label{mainRODthm} \mike{replace with Theorem 1.1}
%Let $F$ be a $k\times k$-valued matrix polynomial and assume %that $F$ is nonsingular in the row ball. If $F$ is a product of %exactly $\ell$ atomic factors, then the optimal approximants %$P_n$ for $F$ satisfy
%\[
%%\|P_n(x)F(x)-I\|_2^2 \lesssim \frac{1}{n^p}
%\]
%where $p=\frac{1}{3^{\ell-1}}$. 
%\end{theorem}

\begin{proof}[Proof of Theorem~\ref{mainRODthm-repeat}]
By Theorem~\ref{nonsingular-is-necessary} (and the remarks in Section~\ref{sec:matrix-coefficients}), if $c_n\to 0$ then necessarily $F$ is nonsingular in the row ball. The other direction of item (1) will evidently follow from item (2). 
Let 
%$f(x)$ be a noncommutative mattrix-valued polynomial in $d$ noncommutating arguments, $x=(x_1,\:x_2,\cdots,\:x_d)$ (that is,
$F\in M_k(\C\fralg{\bbx})$ and assume it is nonsingular on $\mfB^d$.
%row ball, denoted by, $\emph{B}^d:=\bigsqcup_{n=1}^{\infty}\emph{B}_n^d=\{X\in M_n(\mathbb{C})^d\: :\: X_1X_1^*+\cdots+X_dX_d^*<1\}$.  
We can assume, without loss of generality, that $F(0)=I$. %(Since if $F(0)=0$ then det($F(0)$)=det($0$)=$0$ for $0\in \emph{B}^d$). \\
Then by lemma \ref{linearization trick} we have that $F$ is stably associated to a monic linear pencil, say, $L_A(x)$ and 
\[
L_A(x)\sim\begin{bmatrix}
 L_1(x) & * & * & \hdots & * \\
 0 & L_2(x) & * & \hdots & * \\
 0 & 0 & L_3(x) &  \hdots & * \\
 \vdots & \vdots &\vdots & \ddots  & \vdots\\
 0 & 0 & 0 & \hdots & L_l(x)
\end{bmatrix}
\]
where for each $k$ the linear pencil $L_k(x)=I-\Bar{A}^{(k)}x$ has $\Bar{A}^{(k)}$ either $0$ or irreducible. %where $\Bar{A}^{(k)}=(\Bar{A}^{(k)}_1\: \Bar{A}^{(k)}_2\: \cdots \: \Bar{A}^{(k)}_d)$. 
%That is, $L_k(x) = I-\sum_{j=1}^d \Bar{A}^{(k)}_jx_j$. \newline
%\textbf{Linearization trick} \cite{MR3804686} we have that $F$ is stably associated to a monic linear pencil, say, $L_A(x)$. 
By hypothesis, our original $F$ was nonsingular in the row ball, and since stable associativity preserves the zero locus, the pencil $L_A$, and hence also each of the $L_{\Bar{A}^{(k)}}$, is also nonsingular in the row ball. Thus by Lemma~\ref{lem:radius-1-no-zeroes}, each of the pencils $\Bar{A}^{(k)}$ has outer spectral radius at most $1$. It follows, in turn, by Lemma~\ref{M is contraction}, that each $\Bar{A}^{(k)}$ is similar to a column contractive pencil $M^{(k)}$.   

%Proposition $4.1$, \cite{jury2020noncommutative} implies that joint spectral radius of $A$, $\rho(A)\leq 1$.
%Now it follows from Burnside's theorem, any monic linear pencil is similar to a block upper-triangular linear pencil of the form:
%\[
%\begin{bmatrix}
 %L_1(x) & * & * & \hdots & * \\
 %0 & L_2(x) & * & \hdots & * \\
 %0 & 0 & L_3(x) &  \hdots & * \\
 %\vdots & \vdots &\vdots & \ddots  & \vdots\\
 %0 & 0 & 0 & \hdots & L_l(x)
%\end{bmatrix}
%\]
%where for every $k$, linear pencils, $L_k=I$ or $L_k$ is an irreducible.\\Say for each $k$, $L_k(x)=I-\Bar{A}^{(k)}x$ where $\Bar{A}^{(k)}=(\Bar{A}^{(k)}_1\: \Bar{A}^{(k)}_2\: \cdots \: \Bar{A}^{(k)}_d)$. That is, $L_k(x) = I-\sum_{j=1}^d \Bar{A}^{(k)}_jx_j$. \\ 
%Thus from above we get that, for each $k$, $\rho(\Bar{A}^{(k)})\leq1$.

%Hence for each $k$, $\Bar{A}^{(k)}$ is irreducible with spectral radius $\leq 1$. From lemma \ref{M is contraction}, it follows that  for $1\leq k\leq \ell$, $\Bar{A}^{(k)}$ is similar to a column contraction, say $M^{(k)}$, i.e., $\Bar{A}^{(k)}\sim M^{(k)}$.

In summary, we conclude that that our $F$ is stably associated to a monic linear pencil of the following block upper-triangular form:

\begin{equation}\label{eqn:main-reduction-step}
F\sim_{\mathrm{sta}}\begin{bmatrix}
 I-M^{(1)}(x) & * & * & \hdots & * \\
 0 & I-M^{(2)}(x) & * & \hdots & * \\
 0 & 0 & I-M^{(3)}(x) &  \hdots & * \\
 \vdots & \vdots &\vdots & \ddots  & \vdots\\
 0 & 0 & 0 & \hdots & I-M^{(l)}(x)
\end{bmatrix}
\end{equation}
where each $M^{(k)}$ is a column contraction. Note that the off-diagonal entries are arbitrary linear pencils $Y_{ij}(x)$ (genuinely linear, with no constant term). \\

By Remark $5.2$ and Section $6.2$ of \cite{HKVfactor}, the number of blocks $\ell$ will be equal to the number of atomic factors of $F$. 

We now consider a monic linear pencil $L_A(x)$ of the form appearing in the right-hand side of (\ref{eqn:main-reduction-step}), in $\ell\times \ell$ block upper triangular form, with contractive pencils down the diagonal. We prove the following

{\bf Claim:} {\em for each such pencil $L_A(x)$, there exists a sequence of matrix polynomials $\sigma_{n,\ell}(x)$, $n=1, 2, \dots, $ satisfying the following conditions: 
\begin{enumerate}
    \item $\deg \sigma_{n,\ell} \leq (\ell-1) +n +n^3+ \cdots +n^{3^{\ell-1}} \lesssim n^{3^{\ell-1}}$, 
    \item $\|\sigma_{n,\ell}\|_{mult}\lesssim n^{(1+3+3^2+\dots+3^{l-1})}$, and 
    \item $\|\sigma_{n,\ell}(x) (I-Ax)-I\|_2^2 \lesssim \frac1n$
\end{enumerate}
for all $n$, where the implied constants are allowed to depend on $\ell$ but not on $n$. }

Assuming the claim for the moment, we conclude that for this fixed $A$ (which fixes $\ell$) there exist, for each $n$, matrix polynomials $\sigma_N:=\sigma_{n, \ell}$ of degree $N\lesssim n^{3^{\ell-1}}$ such that
\[
\|\sigma_N(x) (I-Ax)-I\|_2^2 \lesssim \frac1n
\]
It follows that for the optimal approximants $P_N$ at this same degree $N$, we will also have $\| Q_N(x)(I-Ax)-I\|_2^2\lesssim \frac1n\lesssim \frac{1}{N^p}$ (where $p=\frac{1}{3^{\ell-1}}$), and thus (since these quantities decrease as the degree $N$ increases), we conclude that for all degrees $n$
\[
\|Q_n(x) (I-Ax)-I\|_2^2 \lesssim \frac{1}{n^p}
\]
where $p=\frac{1}{3^{\ell-1}}$. Finally, since $I-Ax$ was stably equivalent to our original matrix polynomial $F$, we conclude by Lemma \ref{same ROD} that the optimal approximants $P_n$ of the original $F$ achieve the same rate of decay. 

To complete the proof, it remains to prove the claim, which we will do via induction on $\ell$. 

%We claim that there exists a matrix-valued polynomial, $g^{(l)}(x)$  of degree $\approx %n^{(1+3+3^2+\dots+3^{l-1})}$ such that \newline
%\mike[caption={check bounds needed for induction step}, inline]{Check that these bounds %are correct, since they are crucial for getting the formula for $p$ right}
%\palak[caption={Rechecked the bounds}, inline]{I have rechecked the bounds once again but %I'd feel better if Meredith or Meric can look at them once too.}
%\begin{enumerate}
%    \item $\|g^{(l)}f-1\|^2_2\leq \frac{c}{n}$ for some $c\in\R$
%    \item $\|g^{(l)}\|_{\infty}\leq k n^{(1+3+3^2+\dots+3^{l-1})}$ for some $k\in\R$.
%    \end{enumerate}
In the case $\ell=1$, our pencil $L_A(x)$ has the form $1-M^{(1)}x$ for some column contractive irreducible pencil $M^{(1)}$, so we can take $\sigma_{n,1}$ to be the polynomials provided by Lemma \ref{The Lemma}, namely $\sigma_{n,1}(x) =\pi_n(M^{(1)}x)$ where $\pi_n$ are the $1$-variable OPAs for the polynomial $f(z)=1-z$. 

Now suppose the claim is proved for $\ell$, let us prove it for $\ell+1$. We write our pencil in the form
\begin{equation*}
    L_A(x) = \begin{bmatrix} I-\widetilde{A}x &  Yx \\ 0 & I-M^{(\ell+1)}x\end{bmatrix}
\end{equation*}
where $\widetilde{A}$ has at most $\ell$ irreducible blocks. 
We will choose $\sigma_{n,\ell+1}$ of the form
\[
\sigma_{n, \ell+1} (x) =\begin{bmatrix}\sigma_{n,\ell}(x) & r(x) \\ 0 & q(x)\end{bmatrix}
\]
where $\sigma_{n,\ell}$ is chosen by applying the induction hypothesis to the block $\ell\times\ell$ pencil $I-\widetilde{A}x$. 

We choose $q(x)$ to be $\pi_n(M^{(\ell+1)}(x))$, as in Lemma \ref{The Lemma}, so $\deg(q)\leq n$ and $\|q\|_{mult}\lesssim n$ by the lemma. 

Finally we define
\[
r(x) = -\sigma_{n,\ell}(x) \cdot (Yx) \cdot \pi_{N}(M^{(\ell+1)}x), 
\]
where again $\pi_N$ is the one-variable OPA for $f(z)=1-z$, at degree $N=n^{3^\ell}$. Let us now verifty the claims (1)-(3). First, the degree of $\sigma_{n, \ell+1}$ will be the maximum of the degrees of $\sigma_{n,\ell}, r(x)$, and $q(x)$, and we see by inspection that $r(x)$ has the largest degree, which is
\begin{align}
\deg r(x) &= \deg(\sigma_{n, \ell}) + 1 + n^{3^\ell}\\
            &\leq \ell + n+ n^3 + \cdots + n^{3^{\ell}}.
\end{align}
which proves (1). 
For (2), observe that the $\|\cdot\|_{mult}$ norm of $\sigma_{n, \ell+1}$ is comparable to the maximum of the $\|\cdot\|_{mult}$ norms of its entries; by the induction hypothesis, Lemma \ref{The Lemma}, and the definition of $r(x)$, we have
\begin{align*}
\|r(x)\|_{mult} &\leq \|\sigma_{n, \ell}\|_{mult} \|Yx\|_{mult} \|\pi_N(M^{(\ell+1)}x)\|_{mult} \\
&\lesssim n^{1+3+\cdots +3^{\ell}}.
\end{align*}
Since this is larger than $\|\sigma_{n, \ell}\|_{mult}$ and $\|q(x)\|_{mult}$, (2) is proved. 

Finally for (3), we have 

\begin{equation*}
\sigma_{n, \ell+1}(x) (I-A^{(\ell+1)}x) = \begin{bmatrix} \sigma_{n, \ell}(x) (I-A^{(\ell)}x) & \sigma_{n, \ell}(x)\cdot Yx + r(x) (I-M^{(\ell+1)}x) \\ 0 & q(x) (I-M^{(\ell+1)}x)\end{bmatrix}
\end{equation*}

Subtracting $\begin{bmatrix} I & 0 \\ 0 & I\end{bmatrix}$ and taking the (squared) Hilbert space norm, in the diagonal entries we have by the induction hypothesis and the Lemma
\[
\| \sigma_{n, \ell}(x) (I-A^{(\ell)}x)-I \|_2^2 \lesssim \frac1n
\]
and 
\[
\|q(x) (I-M^{(\ell+1)}x) -I\|_2^2 = \|\pi_{n}(M^{(\ell+1)}x) (I-M^{(\ell+1)}x) -I\|_2^2 \lesssim \frac1n.
\]
In the off-diagonal entry, we have by the definition of $r(x)$, the induction hypothesis, and the Lemma (applied at degree $N=n^{3^\ell}$), 
\begin{align*}
\|\sigma_{n, \ell}(x)\cdot Yx + &r(x) (I-M^{(\ell+1)}x) \|_2^2 \leq \\ &\leq\|\sigma_{n,\ell}\|_{mult}^2 \|\|Yx\|_{mult}^2 \|I - \pi_N(M^{(\ell+1)}x) (I-M^{(\ell+1)}x\|_2^2 \\
&\lesssim n^{2(1+3+\cdots 3^{(\ell-1)})} \cdot \frac{1}{n^{3^\ell}}\\
&= n^{3^\ell -1} \cdot \frac{1}{n^{3^\ell}}\\
&=\frac1n.
\end{align*}
This completes the proof of the claim. 
\end{proof}
\iffalse

\fi

As an immediate consequence of the theorem (and the remarks following Definition~\ref{def:free-cyclic}), we obtain a new proof of the following fact, first proved in \cite{jury2020noncommutative}: 

\begin{corollary} Let $f$ be an nc polynomial with scalar coefficients. If $f$ is nonvanishing in the row ball, then $f$ is cyclic for the $d$-shift. 
\end{corollary}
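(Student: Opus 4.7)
The plan is to invoke Theorem~\ref{mainRODthm-repeat} directly, treating the scalar-valued $f$ as the $k=1$ case of the matrix-valued setup. First I would observe that for a scalar-valued nc polynomial $f$, the phrase ``nonvanishing in the row ball'' coincides exactly with the hypothesis ``nonsingular in the row ball'' of the theorem: for each $X\in \mfB^d$ the value $f(X)$ is a square matrix, and $f$ being nonvanishing at $X$ means $f(X)$ is invertible, i.e.\ $\det f(X)\neq 0$. So the hypothesis of the corollary is the same as the hypothesis of part~(1) of the main theorem.

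With this identification in hand, part~(1) of Theorem~\ref{mainRODthm-repeat} gives $c_n = \|p_n f - 1\|_2^2 \to 0$, where $p_n$ denotes the $n$th optimal polynomial approximant of $f^{-1}$. To pass from this to cyclicity for the $d$-shift, I would appeal to the equivalence noted immediately after Definition~\ref{def:free-cyclic}: for any free polynomial $q$, left multiplication by $q$ is bounded on $\cF_d$ (since $q$ acts as $q(L_1,\dots,L_d)$), hence $\|q p_n f - q\|_2 \to 0$. Thus every polynomial $q\in\C\fralg{\bbx}$ lies in the closed linear span of $\{pf : p\in \C\fralg{\bbx}\}$, and since polynomials are dense in $\cF_d$, this span is all of $\cF_d$. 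Therefore $f$ is cyclic for the $d$-shift.

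There is no genuine obstacle here; the entire content of the corollary is packaged in Theorem~\ref{mainRODthm-repeat} together with the elementary observation that cyclicity is equivalent to $c_n\to 0$. The only mildly non-trivial point worth flagging is the translation between ``$f$ nonvanishing in the row ball'' and ``$f$ nonsingular in the row ball'' for scalar $f$, which as noted above is immediate from the fact that $f(X)$ is a square matrix for every $X\in\mfB^d$.
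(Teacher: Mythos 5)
Your proposal is correct and matches the paper's own reasoning: the corollary is cited there as an immediate consequence of Theorem~\ref{mainRODthm-repeat} together with the remarks following Definition~\ref{def:free-cyclic} identifying cyclicity with $c_n\to 0$. The only thing you add is the (correct, if trivial) observation that ``nonvanishing'' and ``nonsingular'' coincide for scalar-valued $f$, which the paper leaves implicit.
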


\iffalse

\fi
\subsection{Remarks and Questions} In general, the exponent $p=\frac{1}{3^{\ell-1}}$ appearing in Theorem~\ref{mainRODthm-repeat} (where $\ell$ is the number of irreducible factors of $F$) is not sharp. Already in one variable, the bound $c_n\lesssim \frac{1}{n}$ holds regardless of the number of factors, as was already mentioned in the introduction.

It is possible to run the proof presented here in the one-variable case, making more careful choices of the polynomials constructed in the induction step, to recover the uniform $\frac{1}{n}$ bound in that case, but this relies crucially on the commutativity of the polynomial ring $\C [z]$.

At the moment, we do not know if the same is true in the free setting, or even if there is a uniform power $p$ so that $\|P_nF-I\|_2^2\lesssim \frac{1}{n^p}$ independently of $F$.  Also, observe that our proof does not actually construct the optimal approximants for any given $f$, we only construct a sequence $p_n$ obeying the claimed bounds, so that the optimal approximants must be at least as good. We have computed the optimal approximants (with computer assistance) in some simple examples, but the results do not appear enlightening and we have not included them.  

The sharp value of $p$ seems difficult to calculate even for relatively simple cases. For example, for the free polynomial in two variables $f(x,y)=(1-x)(1-y)$ one can construct polynomials $p_n$ (by essentially the method used in the induction step of the proof, but making more careful choices of $\sigma_n$, $r_n$) for which
\[
\norm{ p_n(x,y)(1-x)(1-y)-1}_2^2\lesssim \frac{1}{n^{1/2}},
\]
which improves on the bound $\frac{1}{n^{1/3}}$ provided by the theorem, but we do not know if this value $p=1/2$ is sharp. 

Finally, it would be of interest to know 1) whether the methods used here can be modified to handle the case of nc rational functions $F$, and 2) what can be said in the case of non-square polynomial matrix functions $F$. 

\bibliographystyle{acm} 
\bibliography{bibliography.bib}

\end{document}